\DeclareMathOperator{\lcm}{lcm}
\DeclareMathOperator{\tor}{Tor}
\let \gcd \relax
\DeclareMathOperator{\gcd}{gcd}
\DeclareMathOperator{\N}{\mathbb{N}}
\DeclareMathOperator{\Z}{\mathbb{Z}}
\let \dim \relax
\DeclareMathOperator{\dim}{dim}
\let \min \relax
\DeclareMathOperator{\min}{min}
\DeclareMathOperator{\sgn}{sgn}
\DeclareMathOperator{\im}{im}
\DeclareMathOperator{\colim}{colim}
\theoremstyle{definition}
\newtheorem{definition}{Definition}[section]
\newtheorem{theorem}[definition]{Theorem}
\newtheorem{lemma}[definition]{Lemma}
\newtheorem{corollary}[definition]{Corollary}
\newtheorem{example}[definition]{Example}
\newtheorem{remark}[definition]{Remark}
\newtheorem*{question}{Question}
\newtheorem{maintheorem}{Theorem}
\begin{document}

\title{$A_{\infty}$-resolutions and the Golod property for monomial rings}
\author{Robin Frankhuizen}

\maketitle

\begin{abstract}
Let $R=S/I$ be a monomial ring whose minimal free resolution $F$ is rooted. 
We describe an $A_{\infty}$-algebra structure on $F$. Using this structure, we show that $R$ is Golod if and only if the product on $\tor^S(R,k)$ vanishes. 
Furthermore, we give a necessary and sufficient combinatorial condition for $R$ to be Golod.
\end{abstract}

\section{Introduction}
Let $S=k[x_1,\ldots,x_m]$ be the polynomial algebra over a field $k$ in $m$ variables and let $I = (m_1,\ldots, m_r)$ be an ideal generated by monomials.
In that case, $S/I$ is called a \emph{monomial} ring. Given a monomial ring $R = S/I$, the \emph{Poincar\'e series} of $R$ is defined as
\[
 P(R) = \sum_{j=0}^{\infty} \dim \tor^R_j(k,k) t^j.
\]
A result due to Serre states that there is an inequality of power series
\begin{equation*}
 P(R) \leq \frac{(1+t)^m}{1-t(\sum_{j=0}^{\infty}\dim \tor^S_j(R,k) t^j -1)}.
\end{equation*}
The ring $R$ is said to be \emph{Golod} if equality is obtained. The problem of when a monomial ring is Golod goes back to at least the 70s when Golod \cite{golod1978} showed that a monomial ring $R$ is Golod if and only if all Massey products on the Tor-algebra $\tor^S(R,k)$ vanish. 
In general, it is hard to directly verify the vanishing of Massey products and so in practice the Golod property is still hard to determine. 

In recent decades, the Golod property has received an increasing amount of attention in topology. The Tor-algebra shows up naturally in topology as follows. Let $\Delta$ be a simplicial complex on vertex set $[m] = \{1, \ldots, m \}$ and define the \emph{moment-angle complex} $Z_{\Delta}$ as follows. Let $D^2$ denote the $2$-disc and $S^1$ its bounding circle. For $\sigma \in \Delta$, define
$$X_{\sigma} = \prod_{i=1}^m Y_i \subseteq (D^2)^m \quad \mbox{ where } \quad Y_i = \begin{cases} D^2 &\mbox{ if } i \in \sigma \\ S^1 &\mbox{ if } i \notin \sigma \end{cases}$$
Lastly, we put
$$Z_{\Delta} = \colim_{\sigma \in \Delta} X_{\sigma} \subseteq (D^2)^m.$$
Moment-angle complexes are one of the central objects of study in toric topology. For us, the cohomology of $Z_{\Delta}$ is of particular interest.

\begin{theorem}[\cite{buchstaberpanov2015}, Theorem 4.5.4]
Let $\Delta$ be a simplicial complex. There is an isomorphism of graded algebras
$$H^*(Z_{\Delta},k) \cong \tor^S(k[\Delta],k).$$
\end{theorem}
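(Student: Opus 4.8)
The plan is to build an explicit cochain-level model for $Z_{\Delta}$ and identify it, as a differential graded algebra, with the Koszul complex computing $\tor^S(k[\Delta],k)$. First I would give the pair $(D^2,S^1)$ the CW structure with a single $0$-cell $p$, a single $1$-cell $u$ (so that $S^1 = p\cup u$), and a single $2$-cell $v$ (so that $D^2 = p\cup u\cup v$). Taking the $m$-fold product, $(D^2)^m$ acquires a product CW structure whose cells are indexed by tuples $c=(c_1,\ldots,c_m)$ with each $c_i\in\{p_i,u_i,v_i\}$, and a direct inspection of the defining colimit shows that such a product cell lies in $X_{\sigma}$ exactly when $\{i:c_i=v_i\}\subseteq\sigma$. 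Since $\Delta$ is closed under passing to subsets, the cell lies in $Z_{\Delta}=\bigcup_{\sigma}X_{\sigma}$ precisely when the support $\{i:c_i=v_i\}$ is a face of $\Delta$; hence $Z_{\Delta}$ is a CW subcomplex of $(D^2)^m$.

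With this cell structure in hand I would write down the cellular cochain complex $C^*(Z_{\Delta};k)$. Assigning to the $2$-cells the polynomial generators $x_i$ (cohomological degree $2$) and to the $1$-cells the exterior generators $u_i$ (degree $1$), the admissible cells are exactly the monomials $u_{j_1}\cdots u_{j_s}\,x_1^{a_1}\cdots x_m^{a_m}$ whose combined support is a face of $\Delta$. This identifies $C^*(Z_{\Delta};k)$ additively with $\Lambda[u_1,\ldots,u_m]\otimes_k k[\Delta]$, and tracing the cellular coboundary through the attaching maps shows the differential is the Koszul differential $d u_i = x_i$, $d x_i = 0$. Because $\Lambda_S[u_1,\ldots,u_m]$ with $du_i=x_i$ is the Koszul resolution of $k$ over $S$, tensoring with $k[\Delta]$ gives $k[\Delta]\otimes_S\Lambda_S[u_1,\ldots,u_m]\cong\Lambda[u_1,\ldots,u_m]\otimes_k k[\Delta]$, so its cohomology is exactly $\tor^S(k[\Delta],k)$. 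This yields the isomorphism of graded vector spaces.

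The remaining, and genuinely hard, step is to upgrade this to an isomorphism of algebras, i.e. to see that the cup product on $H^*(Z_{\Delta};k)$ matches the product induced by the Koszul complex. The obstacle is that the cellular cup product is computed through a cellular approximation to the diagonal $Z_{\Delta}\to Z_{\Delta}\times Z_{\Delta}$, which a priori is neither graded-commutative nor visibly equal to the multiplication on $\Lambda[u]\otimes k[\Delta]$. I would resolve this by choosing an explicit diagonal approximation determined on the single pair $(D^2,S^1)$ by the comultiplication $u\mapsto u\otimes p + p\otimes u$ and $v\mapsto v\otimes p + u\otimes u + p\otimes v$ on cellular chains, and extending it multiplicatively across the product (it restricts to $Z_{\Delta}$ thanks to the face condition). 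Dualizing, one checks on generators that the resulting cup product reproduces $u_i\cdot u_j$, $u_i\cdot x_j$, and $x_i\cdot x_j$ with the correct Koszul signs, and that the failure of strict graded-commutativity at the cochain level is a coboundary, so it vanishes in cohomology. As an alternative, higher-level route one could instead invoke the Eilenberg--Moore spectral sequence of the Borel fibration $Z_{\Delta}\to ET^m\times_{T^m}Z_{\Delta}\to BT^m$, where $H^*(BT^m)=S$ and the total space has cohomology $k[\Delta]$; collapse of the multiplicative spectral sequence then gives $H^*(Z_{\Delta})\cong\tor^S(k[\Delta],k)$ as algebras, at the cost of shifting the main obstacle to proving collapse and multiplicative convergence, which ultimately rests on the same formality input.
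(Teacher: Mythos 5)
The paper does not prove this statement: it is imported wholesale from \cite{buchstaberpanov2015} (Theorem 4.5.4) and used only as motivation, so there is no internal argument to measure yours against. Judged on its own terms, your outline follows the standard Buchstaber--Panov route (polydisc cell structure, cellular cochains, explicit diagonal approximation, with the Eilenberg--Moore spectral sequence as the alternative), and the overall architecture is right, but the additive step contains a genuine gap.

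With one cell in each of dimensions $0,1,2$ for $(D^2,S^1)$, a product cell of $(D^2)^m$ chooses for each $i$ exactly one of $p_i,u_i,v_i$. Consequently each $x_i$ occurs in a cellular cochain monomial with exponent at most $1$ and never alongside $u_i$; moreover membership in $Z_{\Delta}$ constrains only the set $\{i : c_i=v_i\}$, not the ``combined support'' as you write (cells with many $u_i$'s and no $v_i$'s lie in the torus $T^m\subseteq Z_{\Delta}$ for every $\Delta$). So $C^*(Z_{\Delta};k)$ is \emph{not} additively $\Lambda[u_1,\ldots,u_m]\otimes_k k[\Delta]$; it is the quotient of that Koszul complex by the ideal $(x_i^2,\,u_ix_i : i=1,\ldots,m)$, i.e.\ its square-free multidegree part. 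The basis monomials $u_Jx^a$ with some $a_i\ge 2$, or with $J\cap\mathrm{supp}(a)\neq\emptyset$, are silently discarded by your identification, and the missing step is precisely the lemma that the quotient map $\Lambda[u]\otimes k[\Delta]\to \Lambda[u]\otimes k[\Delta]/(x_i^2,u_ix_i)$ is a quasi-isomorphism of dgas (essentially Lemma 3.2.6 of \cite{buchstaberpanov2015}; equivalently, the concentration of $\tor^S(k[\Delta],k)$ in square-free multidegrees, which requires proof and is not free). A smaller quibble: the diagonal you propose, with the $u\otimes u$ term in the image of $v$, forces $u_i\smile u_i=x_i$ at the cochain level rather than the exterior relation $u_i^2=0$; the term should be omitted (the map is still a chain-level diagonal approximation without it), and while any two diagonal approximations are chain homotopic, so the cohomology ring is unaffected, this choice does break the cochain-level match with the Koszul dga that your argument leans on.
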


Here, $k[\Delta]$ denotes the \emph{Stanley-Reisner ring} 
$$k[\Delta] = S / (x_{i_1}\cdots x_{i_k} \mid \{i_1,\ldots,i_k \} \notin \Delta )$$
of the simplicial complex $\Delta$. Note that $k[\Delta]$ is a square-free monomial ring. In general, the homotopy type of $Z_{\Delta}$ is not well understood, but significant progress has been made for those $Z_{\Delta}$ where $\Delta$ is Golod, see for example Grbi\'c and Theriault \cite{grbictheriault2007,grbictheriault2013}, Iriye and Kishimoto \cite{iriyekishimoto2013b} and Beben and Grbi\'c \cite{bebengrbic2017}.

The preceding discussion makes clear that the Golod property is of interest in both commutative algebra and algebraic topology. Consequently, a lot of work has been done on the Golodness problem. For example, a combinatorial characterization of Golodness in terms of the homology of the lower intervals in the lattice of saturated subsets is given by Berglund in \cite{berglund2006}. Using results from J\"ollenbeck \cite{jollenbeck2006}, it has been claimed in Berglund and J\"ollenbeck \cite{berglundjollenbeck2007} that $R$ is Golod if and only if the product on $\tor^S(R,k)$ vanishes. However, recently a counterexample to this claim was found by Katth\"an in \cite{katthan2015} where the error is traced back to \cite{jollenbeck2006}. This leads naturally to the central question this work investigates. 
\begin{question}
For which classes of monomial rings $R$ is the Golod property equivalent to the vanishing of the product on $\tor^S(R,k)$?
\end{question}
A partial answer to this question is given by Theorem \ref{maintheorem2} below. To answer this question, we develop a new approach to the Golodness problem using $A_{\infty}$-algebras. 
An $A_{\infty}$-algebra is similar to a differential graded algebra (dga), except that associativity only holds up coherent homotopy. 
By contrast with dgas, every resolution admits the structure of an $A_{\infty}$-algebra (as first shown by Burke \cite{burke2015}) hence in particular the minimal free resolution does. 
The first main result of this paper characterizes vanishing of Massey products in terms of this $A_{\infty}$-structure. A monomial ring $R$ is said to satisfy condition $(B_r)$ if all $k$-Massey products are defined and contain only zero for all $k \leq r$. Denote by $K_R$ the Koszul dga of the monomial ring $R$. We obtain the following result.

\begin{maintheorem}
 Let $R = S/I$ be a monomial ring with minimal free resolution $F$. Let $r \in \N$ and let $\mu_n$ be an $A_{\infty}$-structure on $F$ such that $F \otimes_S k$ and $K_R$ are quasi-isomorphic as $A_{\infty}$-algebras. Then $R$ satisfies $(B_r)$ if and only if $\mu_k$ is minimal for all $k \leq r$. 
\end{maintheorem}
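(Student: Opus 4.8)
The plan is to reduce condition $(B_r)$ to the vanishing of the reduced operations $\bar\mu_k := \mu_k \otimes_S k$ on $\tor^S(R,k)$, and then to run an induction on $r$. I would first record the basic dictionary. Since $F$ is the minimal free resolution, $\mu_1 = \partial$ has image in $\mathfrak{m}F$, so $\mu_1 \otimes_S k = 0$ and $F \otimes_S k$ is a \emph{minimal} $A_{\infty}$-algebra: it has zero differential and carries operations $\bar\mu_n$ on the graded vector space $F \otimes_S k = \tor^S(R,k)$. Recalling that $\mu_k$ is minimal exactly when $\im \mu_k \subseteq \mathfrak{m}F$, minimality of $\mu_k$ is equivalent to $\bar\mu_k = 0$. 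Finally, since $F \otimes_S k$ and $K_R$ are quasi-isomorphic as $A_{\infty}$-algebras and $F \otimes_S k$ is already minimal, the operations $\bar\mu_n$ form a minimal $A_{\infty}$-model of the Koszul dga $K_R$, and may therefore be used to compute the Massey products of $\tor^S(R,k) = H(K_R)$.

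The heart of the argument is a comparison lemma: if $\bar\mu_2 = \cdots = \bar\mu_{n-1} = 0$, then every $j$-fold Massey product with $2 \le j \le n-1$ is defined and contains only $0$, while every $n$-fold Massey product $\langle a_1, \ldots, a_n \rangle$ is defined, has vanishing indeterminacy, and equals $\{\pm \bar\mu_n(a_1, \ldots, a_n)\}$. The vanishing of the indeterminacy is the easy half: once $\bar\mu_2 = 0$ the product on $\tor^S(R,k)$ is identically zero, and the indeterminacy of an $n$-fold Massey product is a sum of terms of the shape (class)$\,\cdot\,$(lower Massey product) and (lower Massey product)$\,\cdot\,$(class), each a multiple of this vanishing product. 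The substantive half is the identification with $\bar\mu_n$. Writing $g = (g_n)$ for the $A_{\infty}$-quasi-isomorphism $F \otimes_S k \to K_R$, the $A_{\infty}$-morphism relations, evaluated on $a_1 \otimes \cdots \otimes a_n$ and simplified using $\bar\mu_1 = \cdots = \bar\mu_{n-1} = 0$, read $d\, g_n(a_1, \ldots, a_n) = \pm g_1 \bar\mu_n(a_1, \ldots, a_n) - \sum_{p} \pm g_p(a_1, \ldots, a_p)\, g_{n-p}(a_{p+1}, \ldots, a_n)$. The same relations on proper subintervals exhibit the cochains $g_\ell(a_i, \ldots, a_j)$ as a defining system for $\langle a_1, \ldots, a_n \rangle$, and the displayed relation then identifies the associated Massey class with $\pm g_1 \bar\mu_n(a_1, \ldots, a_n)$, i.e. with $\pm \bar\mu_n(a_1, \ldots, a_n)$ under $g_1$.

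Granting the lemma, the theorem follows by strong induction on $r$. For small $r$ the statement is immediate: $\mu_1$ is minimal because $F$ is minimal, and the first genuine instance $(B_2) \iff \mu_2$ minimal is exactly the $n=2$ case, since the $2$-fold Massey product is the product $\bar\mu_2$. For the inductive step at $r \ge 3$ I assume the equivalence for $r-1$, so that $(B_{r-1})$ is equivalent to the vanishing of $\bar\mu_k$ for all $k \le r-1$, and I assume these hold. Under this hypothesis the lemma guarantees that all $r$-fold Massey products are defined, have zero indeterminacy, and equal $\pm\bar\mu_r$; hence ``all $r$-fold Massey products contain only zero'' is equivalent to $\bar\mu_r(a_1, \ldots, a_r) = 0$ for all inputs, that is, to $\bar\mu_r = 0$, i.e. to $\mu_r$ being minimal. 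Combined with the inductive hypothesis this yields $(B_r) \iff \mu_k$ minimal for all $k \le r$.

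The main obstacle I anticipate is the substantive half of the comparison lemma: organizing the $A_{\infty}$-morphism relations into a bona fide Massey-product defining system and tracking the Koszul signs carefully enough to pin $\bar\mu_n$ down as $\pm \langle a_1, \ldots, a_n \rangle$ on the nose, rather than merely as a member of the coset. Some care is likewise required to check that the word ``defined'' in $(B_r)$ coincides with the existence of the defining systems produced by the inductive vanishing, and that no step tacitly uses more about the $A_{\infty}$-structure than the stated quasi-isomorphism to $K_R$.
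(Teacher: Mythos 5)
Your proposal is correct and follows essentially the same route as the paper: both reduce minimality of $\mu_k$ to the vanishing of $\mu_k \otimes 1$ on $\tor^S(R,k)$, identify $\pm(\mu_k \otimes 1)(a_1,\ldots,a_k)$ with an element of the Massey product $\langle a_1,\ldots,a_k\rangle$, and handle the ``defined and contains only zero'' bookkeeping by induction on $r$. The only difference is one of packaging: the paper obtains the key identification by citing the homotopy transfer theorem together with Theorem 3.1 of Lu--Palmieri--Wu--Zhang (and May's uniqueness lemma), whereas you re-derive the special case you need directly from the $A_{\infty}$-morphism identities for the quasi-isomorphism $F \otimes_S k \to K_R$.
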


Next, we turn our attention to the class of rooted rings. A monomial ring is said to be \emph{rooted} if the minimal free resolution $F$ of $R$ is rooted in the sense of Novik \cite{novik2002}. Rooted resolutions include both the Taylor and Lyubeznik resolution. Given a rooted ring with rooting map $\pi$, we give an explicit $A_{\infty}$-structure in terms of $\pi$. 
 
This $A_{\infty}$-structure allows us to give a combinatorial characterization of the Golod property for rooted rings as follows. Following \cite{jollenbeck2006}, we say that $R$ satisfies the \emph{gcd condition} if for all generators $m_i$ and $m_j$ with $\gcd(m_i,m_j)=1$ there exists a $m_k \neq m_i,m_j$ such that $m_k$ divides $\lcm(m_i,m_j)$. The second main result is then the following. 

\begin{maintheorem}
\label{maintheorem2}
 Let $R$ be a rooted ring. Then the following are equivalent.
 \begin{enumerate}
  \item The ring $R$ is Golod.
  \item The product on $\tor^S(R,k)$ vanishes.
  \item The ring $R$ is gcd. 
 \end{enumerate}
\end{maintheorem}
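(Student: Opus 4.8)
The plan is to let $F$ carry the explicit $A_{\infty}$-structure $\mu_n$ described above in terms of the rooting map $\pi$; this structure is built so that $F \otimes_S k$ and $K_R$ are quasi-isomorphic as $A_{\infty}$-algebras, so Theorem~A applies to it directly. The bridge between the algebra and the combinatorics is the observation that $\mu_k$ is minimal (its image lies in $\mathfrak{m}F$) exactly when the induced operation $\mu_k \otimes_S k$ on $\tor^S(R,k) = F \otimes_S k$ vanishes; in particular the ordinary product on $\tor^S(R,k)$ is $\mu_2 \otimes_S k$, so condition~(2) holds if and only if $\mu_2$ is minimal. By Golod's criterion $R$ is Golod precisely when it satisfies $(B_r)$ for every $r$, and Theorem~A rewrites this as the statement that every $\mu_k$ is minimal. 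This already yields $(1) \Rightarrow (2)$ and reduces the theorem to proving $(2) \Leftrightarrow (3)$ together with the upgrade that $\mu_2$ minimal forces all $\mu_k$ to be minimal.

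I would first dispatch $(2) \Leftrightarrow (3)$ by computing $\mu_2$ on the rooted basis. Indexing the generators of $F$ by rooted subsets $\sigma \subseteq \{1,\ldots,r\}$ in multidegree $\lcm(m_i : i \in \sigma)$, the Taylor-type formula gives on singletons $\mu_2(e_{\{i\}}, e_{\{j\}}) = \pm\, \gcd(m_i,m_j)\, e_{\{i,j\}}$ whenever $\{i,j\}$ is rooted. This term is non-minimal precisely when $\gcd(m_i,m_j)=1$ and $\{i,j\}$ survives in the minimal resolution. I would then establish the combinatorial lemma that, for a coprime pair, $\{i,j\}$ is non-rooted if and only if some generator $m_k$ with $k \neq i,j$ divides $\lcm(m_i,m_j)$: such an $m_k$ forces $\lcm(m_i,m_j,m_k) = \lcm(m_i,m_j)$ and hence cancels $e_{\{i,j\}}$ under $\pi$. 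Thus $\mu_2$ is minimal if and only if every coprime pair admits such an $m_k$, which is exactly the gcd condition, giving $(2) \Leftrightarrow (3)$.

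To finish, I would show that for rooted rings the gcd condition forces not merely $\mu_2$ but every higher $\mu_n$ to be minimal. Here I would use the explicit description of $\mu_n$ in terms of $\pi$: the coefficient of a rooted output $e_{\tau}$ in $\mu_n(e_{\sigma_1}, \ldots, e_{\sigma_n})$ is again a ratio of the shape $\bigl(\prod_j \lcm(m_i : i \in \sigma_j)\bigr)/\lcm(m_i : i \in \tau)$, corrected by the rooting. I would argue by induction on $n$ that such a coefficient can be a unit only if the input index sets contain a coprime pair whose join survives the rooting, i.e. only if $\mu_2$ is already non-minimal. Since the gcd condition makes $\mu_2$ minimal, no unit coefficient can then occur at any level, so every $\mu_n$ is minimal and $R$ is Golod by Theorem~A, closing the implications.

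The main obstacle is this last propagation step. For $\mu_2$ the output is carried by a single pair and the analysis is transparent, but for $n \geq 3$ the formula for $\mu_n$ involves sums over more intricate combinatorial data produced by iterating $\pi$, and ruling out a stray unit coefficient requires careful bookkeeping of how the least common multiples of several generators interact with the rooting. I expect the technical heart to be an inductive reduction showing that any configuration yielding a non-minimal higher product must contain a gcd-violating coprime pair as a sub-configuration, thereby collapsing the a priori infinite hierarchy of Massey-product conditions to the single binary gcd condition.
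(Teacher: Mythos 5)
Your overall architecture matches the paper's: use the explicit $A_{\infty}$-structure on $F$ coming from the rooting map, note that the product on $\tor^S(R,k)$ is $\mu_2\otimes 1$ so that (2) is equivalent to minimality of $\mu_2$, and reduce (1) to minimality of all $\mu_n$ via Theorem~A. The computation of $\mu_2$ on coprime singletons and the resulting equivalence $(2)\Leftrightarrow(3)$ is also essentially the paper's argument, with one caveat: your ``combinatorial lemma'' that for a coprime pair $\{i,j\}$ the existence of an $m_k\neq m_i,m_j$ dividing $\lcm(m_i,m_j)$ forces $\pi(m_i,m_j)\neq m_i,m_j$ is \emph{not} a formal consequence of the rooting axioms. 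A rooting map may perfectly well still choose $\pi(m_i,m_j)=m_i$; the paper's Lemma~\ref{gcdpigcd} rules this out only by constructing an auxiliary rooted set $\{m_i,m_j,\pi(m_j,m_k)\}$ whose differential has a unit coefficient, contradicting \emph{minimality} of $F$ (i.e.\ the hypothesis that $R$ is rooted, not merely that $\pi$ is a rooting map). Your sketch omits this use of minimality, so that direction as written does not go through.

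The genuine gap is the propagation step $(3)\Rightarrow(1)$, which you correctly identify as the heart of the matter but do not close. Your plan --- induct on $n$ and show that a unit coefficient in $\mu_n$ forces a unit coefficient in $\mu_2$ --- is not how the difficulty is resolved, and it is not clear it can be: the higher $\mu_n$ involve the homotopy $\phi$ and do not have the clean ``ratio of lcms'' shape you posit, and a non-minimal term at level $n$ need not literally contain a non-minimal $\mu_2$ on basis elements of $F$ as a sub-configuration. The idea you are missing is that Merkulov's recursion $\lambda_n=\sum(-1)^{s+1}\lambda_2(\phi\lambda_s\otimes\phi\lambda_t)$ lets one write $\mu_n=p\lambda_n$ with $p$ applied last, so minimality of \emph{every} $\mu_n$ follows from the single statement that $p\lambda_2(u_I\otimes u_J)\in(x_1,\ldots,x_m)F$ for all Taylor basis elements $u_I,u_J$ --- no induction on $n$ is needed. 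That statement is automatic when $\gcd(m_I,m_J)\neq 1$ (Lemma~\ref{plambda2minimalonnondisjoint}), and for coprime $m_I,m_J$ the paper analyses each coefficient $\alpha_\sigma$ in the permutation formula for $p$, deriving from a hypothetical unit coefficient either a violation of the gcd condition (via Lemma~\ref{gcdpigcd}) or a non-minimal differential on an explicitly constructed rooted set, contradicting minimality of $F$. Without this reduction and the accompanying coefficient analysis, your argument establishes only $(1)\Rightarrow(2)$ and (modulo the caveat above) $(2)\Leftrightarrow(3)$, leaving the implication from the gcd condition to Golodness unproven.
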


In particular, the main result from \cite{berglundjollenbeck2007} does hold when restricted to rooted rings.

\section{Simplicial resolutions}
Let $S=k[x_1,\ldots, x_m]$ and let $I$ be the ideal minimally generated by monomials $m_1, \ldots,m_r$. The \emph{Taylor resolution} $T$ \cite{taylor1966} of $S/I$ is constructed as follows. 
Let $E$ denote the exterior algebra on generators $u_1, \ldots, u_r$. The resolution $T$ has underlying module $S \otimes_k E$. 
If $J = \lbrace j_1 < \cdots < j_k \rbrace \subseteq \lbrace1, \ldots,r \rbrace$, then we write $u_J = u_{j_1} \cdots u_{j_k}$. Furthermore, we put $m_J = \lcm(m_{j_1}, \ldots, m_{j_k})$. 
We will also write $J^i = \lbrace j_1 < \cdots < \widehat{j_i} < \cdots < j_k \rbrace$. 
The differential $d$ of $T$ is give by
\[
 d(u_J) = \sum_{i=1}^{\vert J \vert} (-1)^{i+1} \frac{m_J}{m_{J^i}} u_{J^i}.
\]
The Taylor resolution admits a multiplication defined by
\[
 u_I \cdot u_J = \begin{cases} \sgn(I,J) \frac{m_Im_J}{m_{I\cup J}} u_{I \cup J} &\mbox{ if } I \cap J = \emptyset \\ 0 &\mbox{ otherwise} \end{cases}
\]
where $\sgn(I,J)$ is the sign of the permutation making $I \cup J$ into an increasing sequence.
This multiplication induces a differential graded algebra (dga) structure on $T$. The \emph{Tor-algebra} $\tor^S(S/I,k)$ of $S/I$ is 
$$ \tor^S(S/I,k) = \bigoplus_{n} \tor^S_n(S/I,k) = \bigoplus_n H_n(T \otimes_S k)$$
where the multiplication is induced by the multiplication on $T$\\

The following method of constructing free resolutions of monomial rings is due to Bayer, Peeva and Sturmfels \cite{bayerpeevasturmfels1998}. Our exposition will follow that of Mermin \cite{mermin2012}. 
Let $\lbrace m_1, \ldots, m_r \rbrace$ be a set of monomials. Fix some total order $\prec$ on $\lbrace m_1, \ldots, m_r \rbrace$. 
After relabelling we may assume that $m_1 \prec m_2 \prec \cdots \prec m_r$. Let $\Delta$ be a simplicial complex on the vertex set $\lbrace 1,\ldots,r \rbrace$. 
By abuse of notation, we will say $\Delta$ is a simplicial complex on vertex set $\lbrace m_1, \ldots, m_r \rbrace$.\\
Assign a multidegree $m_J$ to each simplex $J \in \Delta$ by defining 
\[
m_J = \lcm \lbrace m_j \mid j \in J \rbrace.
\]
Define a chain complex $F_{\Delta}$ associated to $\Delta$ as follows. 
Let $F_n$ be the free $S$-module on generators $u_J$ with $\vert J \vert = n$.
For $J = \lbrace j_1 \prec \cdots \prec j_n \rbrace$, put $J^i = \lbrace j_1 \prec \cdots \prec \widehat{j_i} \prec \cdots \prec j_n \rbrace$. 
The differential $d\colon F_n \to F_{n-1}$ is defined, for $J \in \Delta$, by 
\[
 d(u_J) = \sum_{i=1}^{\vert J \vert} (-1)^{i+1} \frac{m_J}{m_{J^i}} u_{J^i}.
\]
\begin{example}
Let $\Delta^r$ be the full $r$-simplex. Then $F_{\Delta^r}$ is the Taylor resolution of $R = S/I$. This also justifies the use of the same notation for both. 
\end{example}

In general, $F_{\Delta}$ need not be a resolution of $S / I$. However, we have the following theorem.

\begin{theorem}[\cite{bayerpeevasturmfels1998}, Lemma 2.2]
 Let $\Delta$ be a simplicial complex on vertex set $\lbrace m_1, \ldots, m_r \rbrace$ and define, for a multidegree $\mu$, a subcomplex 
 \[
  \Delta_{\mu} = \lbrace J \in \Delta \mid m_J \mbox{ divides } \mu \rbrace.
 \]
 Then $F_{\Delta}$ is a resolution of $R$ if and only if $\Delta_{\mu}$ is either acyclic or empty for all multidegrees $\mu$. 
\end{theorem}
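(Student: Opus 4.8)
The plan is to exploit the $\mathbb{Z}^m$-grading. Since $F_{\Delta}$ is a complex of free multigraded $S$-modules and its differential is homogeneous of multidegree $0$, the homology $H_*(F_{\Delta})$ splits as a direct sum over multidegrees $\mu$. So it suffices to analyze each graded strand $(F_{\Delta})_{\mu}$ separately, as a complex of $k$-vector spaces, and to show that its homology detects exactly the reduced simplicial homology of $\Delta_{\mu}$.

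First I would identify $(F_{\Delta})_{\mu}$ with a simplicial chain complex. The generator $u_J$ sits in multidegree $m_J$, so the degree-$\mu$ component of $F_n$ has $k$-basis the monomials $(\mu/m_J)\,u_J$ with $\vert J \vert = n$ and $m_J \mid \mu$ — that is, exactly the cardinality-$n$ faces $J \in \Delta_{\mu}$. A one-line computation then shows
\[
 d\bigl((\mu/m_J)\,u_J\bigr) = \sum_{i=1}^{\vert J \vert} (-1)^{i+1} (\mu/m_{J^i})\,u_{J^i},
\]
using $\tfrac{m_J}{m_{J^i}}\cdot \tfrac{\mu}{m_J} = \tfrac{\mu}{m_{J^i}}$ together with $m_{J^i}\mid m_J \mid \mu$, so that each face $J^i$ again lies in $\Delta_{\mu}$. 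Under the identification $(\mu/m_J)\,u_J \leftrightarrow J$ this is precisely the simplicial boundary $J \mapsto \sum_i (-1)^{i+1} J^i$. Hence $(F_{\Delta})_{\mu}$ is the augmented simplicial chain complex of $\Delta_{\mu}$, with $u_{\emptyset}$ in homological degree $0$ serving as the augmentation, and therefore $H_n\bigl((F_{\Delta})_{\mu}\bigr) \cong \widetilde{H}_{n-1}(\Delta_{\mu};k)$.

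Next I would dispose of the bottom of the complex. Since $m_{\emptyset} = 1$ divides every $\mu$, the empty face always belongs to $\Delta_{\mu}$, so $F_0 = S\,u_{\emptyset} \cong S$; and $d(u_{\{j\}}) = m_j\,u_{\emptyset}$ gives $\im d_1 = I\cdot u_{\emptyset}$, whence $H_0(F_{\Delta}) = S/I = R$ unconditionally. Thus being a resolution of $R$ is equivalent to the vanishing of all higher homology. Assembling the strand computation, $F_{\Delta}$ is a resolution of $R$ if and only if $H_n(F_{\Delta}) = 0$ for all $n \geq 1$, i.e.\ if and only if $\widetilde{H}_j(\Delta_{\mu};k) = 0$ for all $j \geq 0$ and all $\mu$. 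For a fixed $\mu$ this holds in exactly two cases: either $\Delta_{\mu}$ has a vertex and all its reduced homology vanishes (it is acyclic), or $\Delta_{\mu}$ consists of the empty face alone (it is empty, so $\widetilde{H}_{-1} = k$ but $\widetilde{H}_{\geq 0} = 0$). This is precisely the stated dichotomy.

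The hard part is not the homological algebra but getting the conventions at the bottom of the complex exactly right: the degree shift to \emph{reduced} homology, the role of the empty face as the augmentation, and the boundary case distinguishing ``empty'' ($\Delta_{\mu} = \{\emptyset\}$, where $\widetilde{H}_{-1}\neq 0$) from ``acyclic.'' Once the strand $(F_{\Delta})_{\mu}$ is correctly matched with the augmented chain complex of $\Delta_{\mu}$ so that these edge cases line up with the words ``acyclic or empty,'' the remainder is a routine translation between vanishing of homology and the resolution property.
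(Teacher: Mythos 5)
Your proof is correct, and it handles the delicate edge cases (the empty face as augmentation, the degree shift to reduced homology, and the distinction between ``acyclic'' and ``empty'') properly. The paper itself states this result as a citation to \cite{bayerpeevasturmfels1998} without proof, and your argument --- decomposing into multidegree strands and identifying $(F_{\Delta})_{\mu}$ with the augmented simplicial chain complex of $\Delta_{\mu}$ --- is precisely the standard proof given in that source, so there is nothing further to compare.
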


A resolution $F$ is called a \emph{simplicial resolution} if $F = F_{\Delta}$ for some simplicial complex $\Delta$. 

\begin{remark}
\label{twonotations}
Note that if $\Delta' \subseteq \Delta$, then $F_{\Delta'}$ is a subcomplex of $F_{\Delta}$.
In particular, since each simplicial complex $\Delta$ is included in the full simplex on its vertex set, each simplicial resolution of $S/I$ is a subcomplex of the Taylor resolution of $S/I$. 
\end{remark}

In the rest of the paper we will restrict our attention to the following special type of simplicial resolution which is due to Novik \cite{novik2002}. 
Given an monomial ideal $I = (m_1, \ldots, m_r)$ we define the \emph{lcm-lattice} $L(I)$ to be the set of all $\lcm(m_{i_1},\ldots, m_{i_k})$ where $1 \leq i_1 \leq \cdots \leq i_k \leq r$ and $k=1,\ldots, r$. 
The set $L=L(I)$ admits a partial order given by divisibility. Then $L$ forms a lattice under $a \vee b = \lcm(a,b)$ and $a \wedge b = \gcd(a,b)$. The lattice $L$ has minimal element $\hat{0} = 1$ and maximal element $\hat{1} = \lcm(m_1, \ldots, m_r)$.
\begin{definition}
 A \emph{rooting map} on $L$ is a map $\pi\colon L \setminus \lbrace \hat{0} \rbrace \to \lbrace m_1, \ldots, m_r \rbrace$ such that
\begin{enumerate}
 \item for every $m \in L$, $\pi(m)$ divides $m$
 \item $\pi(m) = \pi(n)$ whenever $\pi(m)$ divides $n$ and $n$ divides $m$. 
\end{enumerate}
\end{definition}

Now, let $\pi$ be a rooting map and let $A \subseteq \lbrace m_1, \ldots, m_r \rbrace$ be non-empty. Define $\pi(A) = \pi(\lcm(A))$. 
A set $A$ is \emph{unbroken} if $\pi(A) \in A$ and $A$ is \emph{rooted} if every non-empty $B \subseteq A$ is unbroken. 
Let $RC(L,\pi)$ denote the set of all rooted sets with respect to $L$ and $\pi$. Then $RC(L,\pi)$ is easily seen to be a simplicial complex on vertex set $\lbrace m_1, \ldots, m_r \rbrace$ and we have the following result.
\begin{theorem}[\cite{novik2002}, Theorem 1]
 Let $I = (m_1, \ldots, m_r)$ be a monomial ideal and let $L$ denote its lcm-lattice. Suppose that $\pi$ is a rooting map on $L$. 
 Then the chain complex $F_{RC(L,\pi)}$ associated to the simplicial complex $RC(L,\pi)$ is a free resolution of $I$. 
\end{theorem}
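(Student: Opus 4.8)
The plan is to deduce the statement from the Bayer--Peeva--Sturmfels criterion stated just above: it suffices to show that for every multidegree $\mu$ the subcomplex $RC(L,\pi)_\mu = \lbrace A \in RC(L,\pi) \mid m_A \text{ divides } \mu \rbrace$ is either empty or acyclic (the resolution of $I$ and of $S/I = R$ differ only by the augmentation term $F_0 = S \cdot u_{\emptyset}$, so the criterion applies). First I would reduce to lattice elements. Given $\mu$, set $V_\mu = \lbrace m_i \mid m_i \text{ divides } \mu \rbrace$. If $V_\mu = \emptyset$ then $RC(L,\pi)_\mu$ is empty and there is nothing to prove, so assume $V_\mu \neq \emptyset$ and put $\nu = \lcm(V_\mu) \in L$. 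A rooted set $A$ satisfies $m_A \mid \mu$ precisely when $A \subseteq V_\mu$, and since $\nu \mid \mu$ we have $V_\nu = V_\mu$; hence $RC(L,\pi)_\mu = RC(L,\pi)_\nu$ consists exactly of the rooted subsets of $V_\nu$, and we have $\nu = \lcm(V_\nu)$. It therefore suffices to prove acyclicity for $\mu = \nu \in L \setminus \lbrace \hat{0} \rbrace$.

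The key step is to exhibit $RC(L,\pi)_\nu$ as a cone, and hence as contractible. The claim is that the vertex $v = \pi(\nu)$ is a cone point: for every rooted $A \subseteq V_\nu$ the set $A \cup \lbrace v \rbrace$ is again rooted. Note first that $v \in V_\nu$, since property (1) of the rooting map gives $\pi(\nu) \mid \nu$. To verify the claim I would check that every nonempty $B \subseteq A \cup \lbrace v \rbrace$ is unbroken. If $v \notin B$, then $B \subseteq A$ is unbroken because $A$ is rooted. If $v \in B$, then $v = \pi(\nu)$ divides $m_B = \lcm(B)$ (as $v \in B$), while $m_B$ divides $\nu$ (as $B \subseteq V_\nu$); property (2) of the rooting map, applied with $m = \nu$ and $n = m_B$, then gives $\pi(B) = \pi(m_B) = \pi(\nu) = v \in B$, so $B$ is unbroken. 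Thus $A \cup \lbrace v \rbrace$ is rooted, so $RC(L,\pi)_\nu$ is a cone with apex $v$, which has vanishing reduced homology and is acyclic.

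Combining the two steps, every $RC(L,\pi)_\mu$ is empty or acyclic, and the Bayer--Peeva--Sturmfels criterion yields that $F_{RC(L,\pi)}$ is a free resolution of $I$. As a consistency check one notes that each singleton $\lbrace m_i \rbrace$ is rooted---because $\pi(m_i)$ is a minimal generator dividing the minimal generator $m_i$, it must equal $m_i$---so that $F_1$ surjects onto every generator of $I$ and the resolution is of the intended module.

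The main obstacle is isolating the correct cone point and recognizing that the two axioms of a rooting map are exactly what forces closure of a rooted set under adjoining $\pi(\nu)$; once the cone-point claim is identified the remainder is formal. The only other point needing care is the reduction from arbitrary multidegrees to elements of $L$, namely verifying that $RC(L,\pi)_\mu$ really is the full subcomplex of rooted subsets of $V_\mu$ and that $V_\nu = V_\mu$.
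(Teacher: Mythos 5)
This theorem is quoted from Novik's paper and is not proved in the present text, so there is no in-paper proof to compare against; your argument is correct and is essentially the standard one from the cited reference. The reduction to $\nu=\lcm(V_\mu)\in L$ is sound, and the cone-point verification is exactly right: for $B\ni v=\pi(\nu)$ one has $\pi(\nu)\mid m_B\mid\nu$, so axiom (2) of a rooting map forces $\pi(B)=\pi(\nu)\in B$, making $RC(L,\pi)_\mu$ a cone and hence acyclic, after which the Bayer--Peeva--Sturmfels criterion finishes the proof.
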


An important special case of this construction is the Lyubeznik resolution:

\begin{definition}
\label{lyubeznikresolution}
Let $I = (m_1, \ldots, m_r)$ be a monomial ideal and pick some total order $\prec$ on the $m_i$. After relabelling we may assume that $m_1 \prec m_2 \prec \cdots \prec m_r$. 
Define 
\[
 \pi(A) = \min_{\prec}\lbrace m_i \mid m_i \mbox{ divides } \lcm(A) \rbrace.
\]
Then $\pi$ is easily seen to be a rooting map. The resolution associated $RC(L,\pi)$ is called the \emph{Lyubeznik resolution}. 
\end{definition}

In this paper we will only consider resolutions $F$ that are as small as possible in the sense that each $F_n$ has the minimal number of generators. 
More precisely, we have the following definition. 

\begin{definition}
Let $S/I$ be a monomial ring. A free resolution $F \to S/I$ is said to be \emph{minimal} if $d(F) \subseteq (x_1,\ldots,x_m)F$. 

\end{definition}

If the minimal free resolution of $S/I$ is a resolution associated to $RC(L,\pi)$ for some rooting map $\pi$, then $I$ (respectively $S/I$) is called a \emph{rooted ideal} (respectively a \emph{rooted ring}). 
Similarly, if the Lyubeznik resolution of $S/I$ is minimal then $I$ (respectively $S/I$) is called a \emph{Lyubeznik ideal} (respectively a \emph{Lyubeznik ring}). 

\begin{example}
Let $S = k[x,y,z]$ and let $I$ be the ideal generated by $m_1 = xy$, $m_2 = yz$ and $m_3 = xz$. Order the generators as $m_1 \prec m_2 \prec m_3$. 
Let $\pi$ be the rooting map of the Lyubeznik resolution as in Definition \ref{lyubeznikresolution}. Then the rooted sets are $m_1$, $m_2$, $m_3$, $m_1m_2$ and $m_1m_3$. So the Lyubeznik resolution is
\begin{center}
\begin{tikzpicture}[descr/.style={fill=white,inner sep=1.5pt}]
        \matrix (m) [
            matrix of math nodes,
            row sep=3em,
            column sep=5em,
            text height=1.5ex, text depth=0.25ex
        ]
        { S^2 & S^3 & S \\
         };

        \path[overlay,->, font=\small, >=latex]
                        (m-1-1) edge node[yshift=1.5ex] {$d_2$} (m-1-2) 
                        (m-1-2) edge node[yshift=1.5ex] {$d_1$} (m-1-3);
\end{tikzpicture}   
\end{center} 
where the differential is given by
\begin{center}
\begin{tikzpicture}[descr/.style={fill=white,inner sep=1.5pt}]
        \matrix (m) [
            matrix of math nodes,
            row sep=3em,
            column sep=5em,
            text height=1.5ex, text depth=0.25ex
        ]
        { d_1 = \begin{bmatrix}xy & yz & xz\end{bmatrix} & \text{and} & d_2 = \begin{bmatrix}
-z & -z \\ x & 0 \\ 0 & y \end{bmatrix}. \\
         };
\end{tikzpicture}   
\end{center} 
In particular, the resolution is minimal and so $I$ is a Lyubeznik ideal. 
\end{example}

We point out that the class of rooted rings is fairly general. It includes for example monomial ideals whose lcm lattice is a geometric lattice as well as matroid ideals of modular matroids \cite{novik2002}. The inclusion of Lyubeznik rings in rooted rings is strict since not every rooting map arises from a total order on the monomial generators as Example 4.1 of \cite{bjornerziegler1991} shows. Finally, not every monomial ring is rooted. Let $I$ be the ideal with monomial generators
\begin{center}
\begin{tikzpicture}[descr/.style={fill=white,inner sep=1.5pt}]
        \matrix (m) [
            matrix of math nodes,
            row sep=0.5em,
            column sep=3em,
            text height=1.5ex, text depth=0.25ex
        ]
        {x_1x_4x_5x_6 & x_2x_4x_5x_6 & x_3x_4x_5x_6 & x_2x_4x_5x_7 & x_3x_4x_5x_7 \\
         x_1x_3x_5x_7 & x_1x_2x_4x_7 & x_1x_4x_6x_7 & x_1x_5x_6x_7 & x_3x_4x_6x_7 \\
         & x_2x_5x_6x_7 & x_2x_3x_6x_7 & x_1x_2x_3x_7 \\
         };
\end{tikzpicture}   
\end{center} 
and let $F$ denote the minimal free resolution. As is shown in \cite{reinerwelker2001}, the matrices of the differential of $F$ cannot be chosen in $\{0, \pm 1 \}$ and consequently $F$ cannot be supported on \emph{any} simplicial complex and hence, in particular, not on a complex $RC(L,\pi)$ coming from a rooting map $\pi$.

\section{$A_{\infty}$-algebras}
In this section we will discuss some basic aspects of the theory of $A_{\infty}$-algebras. The notion was first introduced by Stasheff \cite{stasheff1963} in the context of algebraic topology. 
Since their introduction $A_{\infty}$-algebras have found applications in various branches of mathematics such as geometry \cite{getzlerjones1990}, algebra \cite{stasheff1992} and mathematical physics \cite{kontsevich1995}, \cite{mccleary1999}. 
Though the following section aims to be self-contained, a more extensive introduction can be found in \cite{keller2001}. The exposition below follows that of \cite{lupalmieriwuzhang2009}.\\
In what follows all signs are determined by the \emph{Koszul sign convention}
\begin{equation}
\label{koszulsignconvention}
(f \otimes g) (x \otimes y) = (-1)^{\vert g \vert \cdot \vert x \vert} fx \otimes gy. 
\end{equation}

\begin{definition}
Let $R$ be a commutative ring and $A = \oplus A_n$ a $\Z$-graded free $R$-module. An $A_{\infty}$-algebra structure on $A$ consists of maps $\mu_n\colon A^{\otimes n} \to A$ for each $n \geq 1$ of degree $n-2$ satisfying the \emph{Stasheff identities}
\begin{equation}
\label{stasheffidentities}
 \sum (-1)^{r+st} \mu_u(1^{\otimes r} \otimes \mu_s \otimes 1^{\otimes t}) = 0
\end{equation}
where the sum runs over all decompositions $n=r+s+t$ with $r,t \geq 0$, $s \geq 1$ and $u=r+t+1$. 
\end{definition}
Observe that when applying (\ref{stasheffidentities}) to an element additional signs appear because of the Koszul sign convention (\ref{koszulsignconvention}). In the special case when $\mu_3=0$, it follows that $\mu_2$ is strictly associative and so $A$ is a differential graded algebra with differential $\mu_1$ and multiplication $\mu_2$. An $A_{\infty}$-algebra $A$ is called \emph{strictly unital} if there exists an element $1 \in A$ that is a unit for $\mu_2$ and such that for all $n \neq 2$
$$\mu_n(a_1 \otimes \cdots \otimes a_n) = 0$$
whenever $a_i=1$ for some $i$. 

The notion of a morphism between $A_{\infty}$-algebras will also be needed. 
\begin{definition}
 Let $(A, \mu_n)$ and $(B,\overline{\mu}_n)$ be $A_{\infty}$-algebras. A \emph{morphism} of $A_{\infty}$-algebras (or $A_{\infty}$\emph{-morphism}) $f\colon A \to B$ is a family of linear maps 
 \[
  f_n\colon A^{\otimes n} \to B
 \]
of degree $n-1$ satisfying the \emph{Stasheff morphism identities}
\begin{equation}
 \label{stasheffmorphismidentities}
 \sum (-1)^{r+st}f_u(1^{\otimes r} \otimes \mu_s \otimes 1^{\otimes t}) = \sum (-1)^w \overline{\mu}_q(f_{i_1} \otimes f_{i_2} \otimes \cdots \otimes f_{i_q})
\end{equation}
for every $n \geq 1$. The first sum runs over all decompositions $n=r+s+t$ with $s \geq 1$ and $r,t \geq 0$ where $u=r+t+1$. The second sum runs over all $1 \leq q \leq n$ and all decompositions $n = i_1 + i_2 + \cdots + i_q$ with all $i_s \geq 1$. 
The sign on the right-hand side of (\ref{stasheffmorphismidentities}) is given by
\[
 w = \sum_{p=1}^{q-1}(q-p)(i_p-1).
\]
If $A$ and $B$ are strictly unital, an $A_{\infty}$-morphism is also required to satisfy $f_1(1) = 1$ and
\[
 f_n(a_1 \otimes \cdots \otimes a_n) = 0
\]
if $n \geq 2$ and $a_i = 1$ for some $i$. 
\end{definition}
A morphism $f$ is called a \emph{quasi-isomorphism} if $f_1$ is a quasi-isomorphism in the usual sense.

Let $A$ be an $A_{\infty}$-algebra. Then its homology $HA$ is an associative algebra. A crucial result relating the $A_{\infty}$-algebra $A$ and its homology algebra $HA$ is the \emph{homotopy transfer theorem}. 
\begin{theorem}[Homotopy Transfer Theorem, \cite{kadeishvili1980}, see also \cite{merkulov1999}] 
\label{homotopytransfertheorem}
Let $(A, \mu_n)$ be an $A_{\infty}$-algebra over a field $R$ and let $HA$ be its homology algebra. 
There exists an $A_{\infty}$-algebra structure $\mu'_n$ on $HA$ such that
\begin{enumerate}
 \item $\mu'_1 = 0$, $\mu'_2 = H(\mu_2)$ and the higher $\mu'_n$ are determined by $\mu_n$
 \item there exists an $A_{\infty}$-quasi-isomorphism $HA \to A$ lifting the identity morphism of $HA$.
\end{enumerate}
Moreover, this $A_{\infty}$-structure is unique up to isomorphism of $A_{\infty}$-algebras.
\end{theorem}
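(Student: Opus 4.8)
The plan is to establish existence by transferring the $A_\infty$-structure along an explicit contraction of $A$ onto $HA$, and to deduce uniqueness from a rigidity property of minimal $A_\infty$-algebras (those with $\mu_1=0$). Since $R$ is a field, the underlying complex $(A,\mu_1)$ splits. Concretely, I would choose a subspace of cycles mapping isomorphically onto $HA$ together with a complement to the cycles, producing $R$-linear maps $i\colon HA \to A$ and $p\colon A \to HA$ of degree $0$ and a homotopy $h\colon A \to A$ of degree $+1$ with
\[
p i = \mathrm{id}_{HA}, \qquad \mathrm{id}_A - i p = \mu_1 h + h \mu_1,
\]
where $HA$ carries the zero differential, and I would further arrange the side conditions $h i = 0$, $p h = 0$, $h h = 0$. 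This contraction is the only non-canonical choice in the argument and is the source of the non-uniqueness addressed by the final clause.

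Next I would define the transferred operations $\mu'_n\colon (HA)^{\otimes n} \to HA$ by Merkulov's tree formula: $\mu'_n$ is a signed sum over planar rooted trees with $n$ leaves, with leaves decorated by $i$, each internal vertex of valence $k+1$ by $\mu_k$, each internal edge by $h$, and the root by $p$. In particular $\mu'_1 = p\mu_1 i = 0$ and $\mu'_2 = p\mu_2(i\otimes i) = H(\mu_2)$, which gives claim (1); decorating the root by $h$ instead of $p$ produces the components $f_n$ of a morphism $HA \to A$ whose linear part $f_1 = i$ is a quasi-isomorphism, which gives claim (2). The cleanest way to organize both the definition and the verification is the coalgebra formalism: an $A_\infty$-structure on $A$ is a square-zero coderivation $D$ on the tensor coalgebra $T^c(sA)$, and the contraction $(i,p,h)$ lifts to a contraction of $T^c(sA)$ onto $T^c(s\,HA)$ by the tensor trick. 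Writing $D = D_0 + \delta$, where $D_0$ is induced by $\mu_1$ and $\delta$ by the higher $\mu_n$, I would apply the homological perturbation lemma to transport $\delta$ across this contraction; the resulting perturbed codifferential $D'$ on $T^c(s\,HA)$ is automatically square-zero, its corestrictions are exactly the $\mu'_n$, and the perturbed inclusion is the morphism $f$.

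The main obstacle is verifying that the $\mu'_n$ satisfy the Stasheff identities (\ref{stasheffidentities}) and that the $f_n$ satisfy (\ref{stasheffmorphismidentities}). Carried out by hand this is a delicate induction: one expands $\sum (-1)^{r+st}\mu'_u(\mathrm{id}^{\otimes r} \otimes \mu'_s \otimes \mathrm{id}^{\otimes t})$ as a sum over trees with a marked internal edge, and the homotopy relation $\mu_1 h + h \mu_1 = \mathrm{id}_A - ip$ together with the Stasheff identities for the $\mu_n$ causes adjacent terms to cancel telescopically, the contributions of $ip$ vanishing by the side conditions. In the coalgebra formulation all of this bookkeeping is absorbed into the single identity $D'^2=0$ furnished by the perturbation lemma. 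In either approach the genuinely error-prone part is tracking the Koszul signs (\ref{koszulsignconvention}) introduced by the suspension and by permuting the copies of $h$ past tensor factors; I would fix these once and for all by working with the suspended maps and reading off every sign from the coderivation picture.

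Finally, for uniqueness suppose $\mu'_n$ and $\mu''_n$ are two transferred structures arising from different contractions. Each comes equipped with an $A_\infty$-quasi-isomorphism to $(A,\mu_n)$, and composing one with a quasi-inverse of the other yields an $A_\infty$-quasi-isomorphism $g\colon (HA,\mu'_n) \to (HA,\mu''_n)$ whose linear part $g_1$ lifts $\mathrm{id}_{HA}$. Since both source and target are minimal, $g_1$ is an isomorphism of graded vector spaces, and one then constructs an inverse $A_\infty$-morphism to $g$ by solving the morphism identities (\ref{stasheffmorphismidentities}) degree by degree, the invertibility of $g_1$ letting one solve for each successive component. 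Hence $g$ is an $A_\infty$-isomorphism, so the transferred structure is unique up to isomorphism of $A_\infty$-algebras.
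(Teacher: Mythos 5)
Your proposal is correct in outline, but be aware that the paper itself offers no proof of this statement: the theorem is imported from the literature (Kadeishvili \cite{kadeishvili1980}, Merkulov \cite{merkulov1999}), and the only piece of transfer machinery the paper actually uses is the dga case, stated separately as Theorem \ref{merkulovtheorem} with the explicit formulas \eqref{merkulovlambda} and \eqref{merkulovmun}. Your argument is precisely the standard proof behind those citations: split the complex over the field to get a contraction $(i,p,h)$ with side conditions, define the $\mu'_n$ by the tree formula (equivalently, by perturbing the codifferential on the tensor coalgebra), read off $\mu'_1=0$ and $\mu'_2=H(\mu_2)$, and obtain the quasi-isomorphism $HA \to A$ by re-decorating the root of each tree with $h$. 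The one step you should tighten is uniqueness: you invoke ``a quasi-inverse'' of an $A_\infty$-quasi-isomorphism, and the existence of homotopy inverses of $A_\infty$-quasi-isomorphisms is itself a nontrivial theorem that you neither prove nor cite. You can avoid it entirely within your own framework: the perturbation lemma transports the whole contraction, so it hands you not only the perturbed inclusion $T^c(s\,HA)\to T^c(sA)$ but also a perturbed projection $T^c(sA)\to T^c(s\,HA)$, i.e.\ an $A_\infty$-morphism $A \to (HA,\mu''_n)$; composing this with the inclusion of $(HA,\mu'_n)$ gives an $A_\infty$-morphism between the two minimal structures whose linear part induces the identity on homology, hence is an isomorphism of graded vector spaces (both differentials vanish), and your degree-by-degree inversion of the Stasheff morphism identities \eqref{stasheffmorphismidentities} then closes the argument. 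With that repair the proof is complete and is consistent with the construction the paper relies on elsewhere.
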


An explicit way of contructing $A_{\infty}$-structures on the homology of a dga is due to Merkulov \cite{merkulov1999} and will be discussed next. 
\begin{definition}
Let $A$ be a chain complex and $B \subseteq A$ a subcomplex. A \emph{transfer diagram} is a diagram of the form
\begin{equation}
\label{transferdiagram}
\begin{tikzpicture}[baseline=(current  bounding  box.center)]
\matrix(m)[matrix of math nodes,
row sep=3em, column sep=2.8em,
text height=1.5ex, text depth=0.25ex]
{B &A\\};
\path[->]
(m-1-1) edge [bend left=35] node[yshift=1.5ex] {$i$} (m-1-2)
(m-1-2) edge [bend left=35] node[yshift=-1.5ex] {$p$} (m-1-1)
(m-1-2) edge [loop right, in=35,out=-35,looseness=5, min distance=10mm] node {$\phi$} (m-1-2)
;
\end{tikzpicture}
\end{equation}
where $pi = 1_B$ and $ip - 1 = d \phi + \phi d$. 
\end{definition}

Some authors use the term strong deformation retract for what we call a transfer diagram. Let $(A,d)$ be a dga and let $B$ be a subcomplex of $A$ such that there exists a transfer diagram as in \eqref{transferdiagram} Let $\cdot$ denote the product on $A$. Define linear maps $\lambda_n\colon A^{\otimes n} \to A$ as follows. First, put $\lambda_2(a_1,a_2) = a_1 \cdot a_2$ and we set
\begin{equation}
\label{merkulovlambda}
\lambda_n = \sum_{\substack{s+t = n \\ s,t \geq 1}} (-1)^{s+1} \lambda_2 (\phi \lambda_s, \phi \lambda_t)
\end{equation}
Now, define a second series of maps $\mu_n\colon B^{\otimes n} \to B$ by setting $\mu_1 = d$ and, for $n \geq 2$, 
\begin{equation}
\label{merkulovmun}
\mu_n = p \circ \lambda_n \circ i^{\otimes n}.
\end{equation}
The following theorem will be crucial in the remainder of the paper.
\begin{theorem}[\cite{merkulov1999}, Theorem 3.4]
\label{merkulovtheorem}
Let $(A,d)$ be a dga and $B$ a subcomplex of $A$ such that there exists a transfer diagram of the form \eqref{transferdiagram}. Then the maps $\mu_n$ defined in (\ref{merkulovmun}) give the structure of an $A_{\infty}$-algebra on $B$.  
\end{theorem}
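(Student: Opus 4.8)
The plan is to verify the Stasheff identities \eqref{stasheffidentities} directly for the maps $\mu_n = p\lambda_n i^{\otimes n}$. Since $\mu_1 = d$ is the restricted differential and $\mu_2 = p\lambda_2(i \otimes i)$ is the induced product, the identities for $n = 1, 2$ record only that $B$ is a chain complex and that $\mu_2$ is a chain map; the substance lies in $n \geq 3$. Rather than attack these on $B$ directly, I would first prove a single \emph{master identity} for the uncorrected maps $\lambda_n$ on $A$, and only afterwards push it down to $B$ by pre-composing with $i^{\otimes n}$ and post-composing with $p$.

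For the master identity, write $\Lambda_1 = \mathrm{id}_A$ and $\Lambda_s = \phi\lambda_s$ for $s \geq 2$, so that with the convention $\lambda_1 = d$ the recursion \eqref{merkulovlambda} reads $\lambda_n = \sum_{s+t=n}(-1)^{s+1}\lambda_2(\Lambda_s \otimes \Lambda_t)$. The claim, proved by induction on $n$, is that the total Stasheff defect
\[
 \sum_{r+s+t=n}(-1)^{r+st}\lambda_{r+t+1}(1^{\otimes r}\otimes \lambda_s \otimes 1^{\otimes t})
\]
can be rewritten, using only the Leibniz rule and the associativity of $\lambda_2 = \cdot$, as a boundary-type expression in $d$ together with the homotopy relation $d\phi + \phi d = ip - 1$. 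The base cases rest on the Leibniz rule ($n = 2$, where the defect vanishes outright) and on associativity combined with the definition of $\lambda_3$ ($n = 3$). In the inductive step I would expand the outer $\lambda_{r+t+1}$ and each inner $\lambda_s$ through \eqref{merkulovlambda}, apply $d$ via Leibniz, and substitute $d\phi = ip - 1 - \phi d$ every time the differential meets an internal factor $\Lambda_s = \phi\lambda_s$.

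Pushing down to $B$ is then largely bookkeeping. Applying $p(-)i^{\otimes n}$ and using that $i$ and $p$ are chain maps ($d\,i = i\,\mu_1$ and $p\,d = \mu_1\,p$) converts the boundary $d$-terms into precisely the $\mu_1$-contributions of the Stasheff identity for $\mu_n$. The relation $pi = 1_B$ collapses each surviving $ip$ insertion into a genuine composite $\mu_{r+t+1}(1^{\otimes r}\otimes \mu_s \otimes 1^{\otimes t})$, so that the $ip$ part of $d\phi + \phi d$ reassembles exactly the quadratic Stasheff terms. The remaining $-1$ and $\phi d$ contributions telescope against one another and cancel once they are sandwiched between $p$ and $i$, leaving the required identity \eqref{stasheffidentities} for the $\mu_n$.

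The main obstacle, and the part demanding genuine care, is the sign and combinatorial matching in the inductive step: one must reconcile the Stasheff sign $(-1)^{r+st}$, the recursion sign $(-1)^{s+1}$, and the Koszul signs \eqref{koszulsignconvention} generated by commuting $d$ and $\phi$ past tensor factors, and simultaneously check that the substitution $d\phi = ip - 1 - \phi d$ groups the terms so that partial products $\phi\lambda_s$ fuse into the correct higher $\lambda$'s. Verifying that every spurious term either cancels in a telescoping pair or dies after applying $p$ and $i$, with all signs consistent, is where the real work concentrates.
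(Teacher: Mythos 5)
The paper never proves this theorem: it is imported wholesale from Merkulov (\cite{merkulov1999}, Theorem 3.4), with the statement transcribed following the exposition in \cite{lupalmieriwuzhang2009}. So the only meaningful comparison is with the cited proof, and your plan is essentially that proof: one establishes, by induction on $n$, a ``master identity'' for the maps $\lambda_n$ on $A$, using only the Leibniz rule, associativity of $\lambda_2$, and the substitution $d\phi = ip - 1_A - \phi d$; one then obtains the Stasheff identities for $\mu_n = p\lambda_n i^{\otimes n}$ by sandwiching with $p$ and $i^{\otimes n}$, using that $i$ and $p$ are chain maps and $pi = 1_B$, the point being that the internal $ip$ insertions produced by the homotopy relation are exactly the quadratic Stasheff composites $\mu_{r+t+1}(1^{\otimes r}\otimes \mu_s \otimes 1^{\otimes t})$, while the $-1$ terms fuse, via associativity, into the recursion defining the higher $\lambda$'s. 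That is the standard and correct strategy.

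Two genuine gaps remain, and you have in effect flagged them yourself. First, your write-up defers exactly the content of the theorem: the master identity is never formulated. For this particular result the precise statement of that identity, with signs, \emph{is} the proof --- the sandwiching step is routine --- so a proof that postpones it to ``the real work'' has not yet proved anything. Second, a concrete sign trap that must be resolved before the induction can even start: the recursion \eqref{merkulovlambda} involves the symbols $\phi\lambda_1$ (the $s=1$ and $t=1$ terms), which the paper never defines; in \cite{lupalmieriwuzhang2009} the convention is $\phi\lambda_1 = -\mathrm{id}_A$, whereas you take $\Lambda_1 = +\mathrm{id}_A$. This is not an innocent choice: $\lambda_3$ changes sign when the convention flips, while $\lambda_4$ does not (the $\Lambda_1$-factors occur an even number of times in each of its terms), and flipping $\mu_3$ alone breaks the $n=4$ Stasheff identity in general, since the terms linear in $\mu_3$ flip while the terms involving $\mu_1, \mu_2, \mu_4$ do not. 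Hence exactly one of the two conventions is compatible with \eqref{stasheffidentities} as stated, and your induction must pin down the correct one at the outset; this is precisely the kind of sign issue you relegate to bookkeeping, but here it decides whether the claimed identity is true or false.
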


\section{$A_{\infty}$-resolutions and the Golod property}
Let $R$ be a monomial ring. Recall that $R$ is called \emph{Golod} if there is an equality of power series
\begin{equation}
\label{goloddefinition}
 P(R) = \frac{(1+t)^m}{1-t(\sum_{j=0}^{\infty}\dim \tor^S_j(R,k) t^j -1)}
\end{equation}
The Golod property admits an equivalent description in terms of Massey products which will be defined next.

\begin{definition}
Let $(A,d)$ be a differential graded algebra. If $a \in A$, we write $\bar{a}$ for $(-1)^{\text{deg}(a) +1}a$. \\
Let $\alpha_1,\alpha_2 \in HA$. The length $2$ \textit{Massey product} $\langle \alpha_1, \alpha_2 \rangle$ is defined to be the product $\alpha_1 \alpha_2$ in the homology algebra $HA$. \\
Let $\alpha_1, \ldots, \alpha_n \in HA$ be homology classes with the property that each length $j-i+1$ Massey product $\langle \alpha_i, \ldots, \alpha_j \rangle$ is defined and contains zero for $i<j$ and $j-i < n-1$. A \emph{defining system} $\{ a_{ij} \}$ consists of
\begin{enumerate}
\item For $i=1,\ldots,n$, representing cycles $a_{i-1,i}$ of the homology class $\alpha_i$.
\item For $j > i+1$, elements $a_{ij}$ such that 
$$da_{ij} = \sum_{i<k<j} \bar{a}_{ik}a_{kj}.$$ 
\end{enumerate}
Note that the existence is guaranteed by the condition that $\langle \alpha_i, \ldots, \alpha_j \rangle$ is defined and contains zero for $i<j$ and $j-i < n-1$.
The length $n$ \textit{Massey product}$\langle \alpha_1, \ldots, \alpha_n \rangle$  is defined as the set
$$ \langle \alpha_1, \ldots, \alpha_n \rangle  = \{ [\sum_{0<i<n} \bar{a}_{0i} a_{in}]\mid \{ a_{ij} \} \mbox{ is a defining system } \} \subseteq H^{s+2-n}$$
where $s = \sum_{i=1}^n \deg \alpha_i$. 
\end{definition}

A Massey product $\langle \alpha_1, \ldots, \alpha_n \rangle$ is said to be \emph{trivial} if it contains zero. The \emph{Koszul homology} of a monomial ring $R$ is $H(R) = \tor^S(R,k)$. The Golod property and Massey products are related by the following theorem. 

\begin{theorem}[\cite{golod1978}, see also Section 4.2 of \cite{gulliksenlevin1969}]
\label{golodiffmasseytrivial}
Let $R$ be a monomial ring. Then $R$ is Golod if and only if all Massey products on the Koszul homology $\tor^S(R,k)$ are trivial.
\end{theorem}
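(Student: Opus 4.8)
The plan is to prove the theorem by reformulating both conditions in terms of a single homological gadget on the Koszul complex, a \emph{trivial Massey operation}, and then realizing Golodness through an explicit resolution of the residue field. Throughout write $K = K_R$ for the Koszul DG algebra of $R$, so that $K$ is a free DG $R$-algebra with augmentation $K \to k$, with $H_0(K) = k$ and $H_{\geq 1}(K) = \tor^S_{\geq 1}(R,k)$ the positive part of the Koszul homology $H(R)$. Set $\overline{H} = H_{\geq 1}(K)$, so that $\dim_k \overline{H}_j = \dim_k \tor^S_j(R,k)$ for $j \geq 1$. The bookkeeping identity that makes the numerics transparent is the following: the right-hand side of the Golod equality \eqref{goloddefinition} is precisely the Hilbert series of the free graded module
\[
 K \otimes_k T\!\left(\Sigma \overline H\right), \qquad T(\Sigma \overline H) = \bigoplus_{n \geq 0} \big(\Sigma \overline H\big)^{\otimes n},
\]
where $\Sigma$ raises homological degree by one. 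Indeed, the factor $K$ contributes $(1+t)^m$, each tensor factor $\Sigma\overline H$ contributes $t\big(\sum_j \dim_k\tor^S_j(R,k)\,t^j - 1\big)$, and summing the resulting geometric series over $n$ produces exactly the denominator of \eqref{goloddefinition}. Thus it suffices to show that this graded module underlies a \emph{minimal} free resolution of $k$ over $R$ precisely when all Massey products on $H(R)$ are trivial.

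Next I would introduce the trivial Massey operation: a family of $k$-linear maps $\gamma_n \colon \overline H^{\otimes n} \to K$, with $\gamma_1(h)$ a cycle representing $h$, subject to
\[
 d\,\gamma_n(h_1 \otimes \cdots \otimes h_n) = \sum_{0 < i < n} \overline{\gamma_i(h_1 \otimes \cdots \otimes h_i)}\;\gamma_{n-i}(h_{i+1} \otimes \cdots \otimes h_n).
\]
Comparing with the definition of a defining system, one sees that setting $a_{i-1,i} = \gamma_1(h_i)$ and $a_{ij} = \gamma_{j-i}(h_{i+1}\otimes\cdots\otimes h_j)$ exhibits, for every tuple, a defining system whose associated Massey product element $\sum_{0<i<n}\overline{a_{0i}}\,a_{in}$ equals the boundary $d\gamma_n(h_1\otimes\cdots\otimes h_n)$, hence is zero in homology. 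So a trivial Massey operation forces every Massey product to be defined and to contain zero. Conversely, assuming all Massey products trivial, I would build the $\gamma_n$ by induction on $n$: at each stage the triviality of $\langle h_1,\ldots,h_n\rangle$ supplies a chain whose boundary is the required sum, and extending linearly over a chosen basis of $\overline H$ assembles these choices into the maps $\gamma_n$. This gives the equivalence of the vanishing of all Massey products with the existence of a trivial Massey operation.

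Finally I would realize Golodness. Given a trivial Massey operation, equip $F = K \otimes_k T(\Sigma \overline H)$ with the differential combining (i) the internal Koszul differential on the $K$-factor, (ii) Golod's external term multiplying the $K$-factor by the leading representing cycle $\gamma_1$, and (iii) the terms collapsing consecutive bars $h_i\otimes\cdots\otimes h_j$ via $\gamma_{j-i+1}$. The identities above are exactly what is needed to verify $d^2 = 0$; since all the new chains lie in the augmentation ideal, the differential maps $F$ into $\mathfrak m F$, so $F$ is minimal, and a filtration argument shows $F$ resolves $k$. Counting ranks then gives $P(R)$ equal to the series computed above, that is, the Golod equality. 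For the converse, Serre's inequality already bounds $P(R)$ from above by this series; equality means the minimal resolution of $k$ has exactly the ranks of $K \otimes_k T(\Sigma \overline H)$, leaving no room for cancellation, and unwinding this forces the coherent system of chains defining the differential to satisfy precisely the trivial Massey operation identities, whence all Massey products vanish.

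I expect the main obstacle to be this converse direction: converting the purely numerical equality of Poincaré series into the homotopy-coherent system of chains, since one must rule out accidental numerical coincidences and show that minimality propagates through all tensor lengths simultaneously. The cleanest way to control this is to run the change-of-rings spectral sequence computing $\tor^R(k,k)$ from $\tor^S(k,k)$ and $H(R)$ and to observe that the Golod equality forces its collapse, the surviving differentials being exactly the obstructions measured by the Massey products.
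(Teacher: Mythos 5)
First, note that the paper does not prove this statement: it is quoted from Golod and from Section 4.2 of Gulliksen--Levin, so there is no in-paper argument to compare yours against. Your proposal reconstructs the classical proof from those sources --- trivial Massey operations, the Golod resolution $K \otimes_k T(\Sigma \overline{H})$, and the rank count against Serre's bound --- and the outline is the right one; the series bookkeeping and the implication ``trivial Massey operation $\Rightarrow$ every Massey product contains zero'' are correct as written. The first genuine gap is in the reverse implication, where you ``build the $\gamma_n$ by induction \dots\ the triviality of $\langle h_1,\ldots,h_n\rangle$ supplies a chain whose boundary is the required sum.'' Triviality in the sense used here (and in the paper) only says that \emph{some} defining system yields a bounding cycle, whereas your induction needs the \emph{specific} defining system assembled from $\gamma_1,\ldots,\gamma_{n-1}$ to bound; a priori that system could represent a nonzero element of the (multi-valued) Massey product. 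This is exactly what Lemma \ref{brimpliesuniquemassey} repairs: once $(B_{n-1})$ is established, the $n$-fold products are single-valued, so ``contains zero'' upgrades to ``contains only zero'' and the induction closes. You need to invoke it (or reprove it); without it the construction of $\gamma_n$ can stall.

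Second, the direction Golod $\Rightarrow$ triviality is where the real work lies, and your sketch does not yet contain it. ``No room for cancellation'' is a statement about ranks; to convert it into the existence of the chains $\gamma_n$ you must first exhibit, for an \emph{arbitrary} monomial ring $R$, a resolution of $k$ of the shape $K \otimes_k T(\Sigma\overline{H})$ (the Eagon resolution, or an acyclic closure), observe that its ranks dominate those of the minimal resolution termwise with equality precisely when it is minimal, and only then read the trivial-Massey-operation identities off the minimality of its differential. The closing appeal to a change-of-rings spectral sequence whose ``surviving differentials are exactly the obstructions measured by the Massey products'' is a heuristic, not an argument, and would itself require proof. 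A smaller correction: the Golod differential on $K\otimes_k T(\Sigma\overline{H})$ collapses only \emph{initial} segments $h_1\otimes\cdots\otimes h_i$ into the $K$-factor; collapsing arbitrary consecutive runs $h_i\otimes\cdots\otimes h_j$, as you write in item (iii), does not even land in the module, let alone satisfy $d^2=0$.
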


Following \cite{katthan2016}, we will say that a dga $A$ satisfies \emph{condition} $(B_r)$ if all $k$-ary Massey products are defined and contain only zero for all $k \leq r$. Recall the following lemma.

\begin{lemma}[\cite{may1969}, Proposition 2.3]
\label{brimpliesuniquemassey}
 Let $A$ be a dg algebra satisfying $(B_{r-1})$. Then $\langle a_1, \ldots, a_r \rangle$ is defined and contains only one element for any choice $a_1,\ldots, a_r \in H(A)$. 
\end{lemma}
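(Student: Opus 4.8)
The plan is to verify the two assertions separately: that $\langle a_1, \ldots, a_r \rangle$ is defined, which is essentially formal, and that it is a singleton, which requires controlling the indeterminacy. Throughout I write $a_{ij}$ for the entries of a defining system, so that $a_{i-1,i}$ is a cycle representing $a_i$.

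For \emph{definedness} I would unwind the definition. A defining system consists of the cycles $a_{i-1,i}$ together with elements $a_{ij}$ (for $j>i+1$, $(i,j)\neq(0,r)$) satisfying $d a_{ij} = \sum_{i<k<j}\bar a_{ik}a_{kj}$. I would produce these by induction on $j-i$: given all lower entries, the right-hand side $\sum_{i<k<j}\bar a_{ik}a_{kj}$ is a cycle, and the truncated data $(a_{kl})_{i\le k<l\le j}$ exhibits its homology class as a value of $\langle a_{i+1},\ldots,a_j\rangle$, a Massey product of length $j-i\le r-1$. Since $(B_{r-1})$ guarantees that this product contains only zero, the cycle is a boundary and $a_{ij}$ can be chosen. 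Hence a defining system exists and the product is defined.

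For the \emph{singleton} claim, the base case $r=2$ is immediate, as $\langle a_1,a_2\rangle = a_1a_2$ is the ordinary product. For $r\ge 3$ I would compute the indeterminacy by comparing two defining systems and tracking how the value $\sum_{0<i<r}\bar a_{0i}a_{ir}$ changes under modifications: a change of a cycle $a_{i-1,i}$ by a boundary can be absorbed, while a change of a higher entry by a cycle $z$, propagated through the defining relations in which that entry participates, alters the value by a term of the form $\bar z\cdot(\text{value of }\langle a_{i+1},\ldots,a_r\rangle)$ or $(\text{value of }\langle a_1,\ldots,a_i\rangle)\cdot z$. This shows the indeterminacy is contained in $\sum_{k=1}^{r-1}\big(\langle a_1,\ldots,a_k\rangle\cdot H(A) + H(A)\cdot\langle a_{k+1},\ldots,a_r\rangle\big)$, where $\langle a_1\rangle$ and $\langle a_r\rangle$ denote the single classes $a_1$ and $a_r$.

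With this upper bound in hand the conclusion is immediate. For $2\le k\le r-1$ the factor $\langle a_1,\ldots,a_k\rangle$ (resp.\ $\langle a_{k+1},\ldots,a_r\rangle$) has length between $2$ and $r-1$, hence equals $\{0\}$ by $(B_{r-1})$, killing those summands; the extreme summands $a_1\cdot H(A)$ and $H(A)\cdot a_r$ vanish because $(B_{r-1})$ contains $(B_2)$, so the ordinary product on $H(A)$ is identically zero. Thus the indeterminacy is trivial and, being nonempty by the first paragraph, the product is a single element. The main obstacle is the indeterminacy computation of the third paragraph: making precise how a local change propagates through the coupled defining relations, and in particular pinning down the signs arising from the convention $\bar a = (-1)^{\deg a+1}a$ together with the Koszul rule, so that each change in the value is correctly identified as a product of an arbitrary class with a genuine lower Massey-product value. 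This bookkeeping, rather than any conceptual difficulty, is the crux.
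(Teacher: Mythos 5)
First, a point of order: the paper does not prove this lemma at all --- it is quoted verbatim from May \cite{may1969}, so there is no in-paper proof to compare against; your proposal has to be judged on its own merits. Your first half (definedness) is correct and is the standard argument: the entries $a_{ij}$ are built by induction on $j-i$, and the induction closes precisely because $(B_{r-1})$ says each sub-product contains \emph{only} zero, so the particular value realized by the partial defining system $(a_{kl})_{i\le k<l\le j}$ is necessarily a boundary. You invoke this correctly.

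The uniqueness half, however, has a genuine gap, and it sits exactly in the step you dismiss as bookkeeping. Two defining systems $(a_{ij})$ and $(b_{ij})$ for the same classes are not visibly connected by the moves you allow: as soon as any lower entries differ, the differences $b_{ij}-a_{ij}$ are not cycles, so ``change one entry by a cycle and propagate'' does not by itself reach an arbitrary second defining system. To make it work one must induct on interval length: after forcing agreement on all entries of length $<\ell$, the length-$\ell$ entries differ by cycles $z_{ij}$; replacing them forces one to re-solve the defining equations for every longer entry, and the \emph{existence} of those corrections is itself obstructed by homology classes of Massey products of length $<r$ involving the arbitrary classes $[z_{ij}]$. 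So the hypothesis $(B_{r-1})$ --- crucially, for \emph{all} classes, not just $a_1,\ldots,a_r$ --- is needed throughout the propagation, not only in your final killing step. Moreover, the form of the change in value is mis-stated: modifying an \emph{interior} entry $a_{ij}$ (with $0<i$ and $j<r$) by a cycle $z$ changes the value not by $\bar z\cdot(\cdots)$ or $(\cdots)\cdot z$ but by a value of the spliced product $\langle a_1,\ldots,a_i,[z],a_{j+1},\ldots,a_r\rangle$, of length $r-(j-i)+1\le r-1$. For instance, for $r=4$, changing $a_{13}$ alters the value by an element of $\langle a_1,[z],a_4\rangle$, which is not of your claimed form. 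Consequently your indeterminacy bound $\sum_{k}\bigl(\langle a_1,\ldots,a_k\rangle\cdot H(A)+H(A)\cdot\langle a_{k+1},\ldots,a_r\rangle\bigr)$ is not what these moves produce; indeed, the impossibility of describing the indeterminacy of higher products by ordinary lower products is precisely what led May to introduce \emph{matric} Massey products. Your conclusion survives because the spliced products also have length $\le r-1$ and hence contain only zero under $(B_{r-1})$, but the proof must be restructured around that corrected statement, with the hypothesis invoked at every propagation step rather than once at the end.
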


Let $R$ be a monomial ring and let $K_S$ be the Koszul resolution of the base field $k$ over $S$. The \emph{Koszul dga} $K_R$ of $R$ is defined as $K_R = R \otimes_S K_S$. The Koszul dga and the Taylor resolution are related by a zig-zag of dga quasi-isomorphisms  

\begin{center}
\begin{tikzpicture}[baseline=(current  bounding  box.center)]
\matrix(m)[matrix of math nodes,
row sep=3em, column sep=3em,
text height=1.5ex, text depth=0.25ex]
{T \otimes_S k &T \otimes_S K_S & R \otimes_SK_S = K_R\\};
\path[->]
(m-1-2) edge (m-1-1)
(m-1-2) edge (m-1-3)
;
\end{tikzpicture}
\end{center}
Consequently, Massey products on $\tor^S(R,k)$ can be computed using either $K_R$ or $T \otimes_S k$. Again following \cite{katthan2016}, we say that a monomial ring $R$ satisfies $(B_r)$ if the dga $K_R$ of $R$ satisfies $(B_r)$.

\begin{lemma}
 Let $R$ be a monomial ring. Then $R$ is Golod if and only if $R$ satisfies condition $(B_r)$ for all $r \in \N$. 
\end{lemma}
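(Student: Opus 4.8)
The plan is to reduce the statement to the already-available characterization of Golodness in terms of Massey products, namely Theorem \ref{golodiffmasseytrivial}, which says that $R$ is Golod if and only if \emph{all} Massey products on $\tor^S(R,k)$ are trivial (contain zero). Since we have agreed to say that $R$ satisfies $(B_r)$ exactly when the Koszul dga $K_R$ does, and since Massey products on $\tor^S(R,k)$ may be computed in $K_R$ via the zig-zag of dga quasi-isomorphisms recorded above, the entire argument can be carried out at the level of the single dga $K_R$. Thus it suffices to prove the purely dga-theoretic fact: a dga $A$ (here $A = K_R$) has all its Massey products trivial if and only if $A$ satisfies $(B_r)$ for every $r \in \N$.

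First I would unwind the definitions in each direction. For the forward implication, suppose $R$ is Golod. By Theorem \ref{golodiffmasseytrivial} every Massey product on $\tor^S(R,k)$ is defined and contains zero. I would then argue by induction on $r$ that $K_R$ satisfies $(B_r)$: condition $(B_r)$ asks that every $k$-ary Massey product with $k \le r$ be \emph{defined} and contain only zero. ``Defined'' requires the relevant shorter products to contain zero so that a defining system exists, which is exactly what triviality of all Massey products supplies; and ``contains only zero'' is the statement that the indeterminacy set equals $\{0\}$, which again is the triviality hypothesis applied to each product. So the forward direction is essentially an unpacking: triviality of all Massey products is a stronger-looking but equivalent packaging of $(B_r)$ holding for all $r$.

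For the converse, suppose $R$ satisfies $(B_r)$ for all $r \in \N$. Fix an arbitrary Massey product $\langle \alpha_1, \ldots, \alpha_n \rangle$ on $\tor^S(R,k)$; I must show it is trivial. Since $K_R$ satisfies $(B_{n-1})$ in particular, Lemma \ref{brimpliesuniquemassey} (May's Proposition 2.3) guarantees that $\langle \alpha_1, \ldots, \alpha_n \rangle$ is defined and consists of a single element. On the other hand, $(B_n)$ says precisely that every $n$-ary Massey product is defined and contains only zero, so that single element must be $0$. Hence every Massey product is trivial, and Theorem \ref{golodiffmasseytrivial} yields that $R$ is Golod.

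The only genuinely delicate point, and the step I expect to need the most care, is the bookkeeping of \emph{which} instance of $(B_r)$ is invoked where: the definedness of an $n$-ary product relies on $(B_{n-1})$ (via Lemma \ref{brimpliesuniquemassey}) while its triviality relies on $(B_n)$, so the induction must be threaded so that these indices line up and no circularity creeps in. One must also be slightly careful that the Massey products computed in $K_R$ genuinely agree with those on $\tor^S(R,k)$; this is where the dga quasi-isomorphisms in the zig-zag $T \otimes_S k \leftarrow T \otimes_S K_S \rightarrow K_R$ are used, since Massey products are invariants of the quasi-isomorphism type of a dga. Beyond this indexing and invariance check, the proof is a direct translation between two equivalent formulations and requires no new construction.
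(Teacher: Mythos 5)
Your overall architecture matches the paper's: reduce everything to Theorem \ref{golodiffmasseytrivial} (Golod $\Leftrightarrow$ all Massey products trivial) and handle the indexing by induction on $r$. The direction ``$(B_r)$ for all $r$ implies Golod'' is correct, though you make it harder than it is: once $(B_n)$ holds, every $n$-ary Massey product is defined and contains zero by definition, so it is trivial outright; Lemma \ref{brimpliesuniquemassey} is not needed in that direction, and the paper dispatches it in one sentence.

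The genuine gap is in the forward direction, which is where all the content of the lemma lives. You claim that ``contains only zero'' is ``the triviality hypothesis applied to each product,'' but triviality of $\langle \alpha_1, \ldots, \alpha_n \rangle$ only asserts $0 \in \langle \alpha_1, \ldots, \alpha_n \rangle$; a Massey product is a \emph{set} (ranging over defining systems) and may a priori contain nonzero elements alongside $0$, so Golodness alone does not give ``contains only zero.'' This is precisely where Lemma \ref{brimpliesuniquemassey} must be deployed, and in the forward implication rather than the converse: assuming $(B_{r-1})$ as the inductive hypothesis, May's result says every $r$-ary Massey product is defined and consists of a \emph{single} element; Golodness, via Theorem \ref{golodiffmasseytrivial}, forces that single element to be zero; hence the product contains only zero and $(B_r)$ holds. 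Your closing paragraph shows you have the right lemma in hand, but you have threaded it through the wrong implication, leaving the passage from ``contains zero'' to ``contains only zero'' unjustified exactly where it matters.
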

\begin{proof}
 It is clear that if $R$ satisfies condition $(B_r)$ for every $r$ then $R$ is Golod. Conversely, suppose that $R$ is Golod.
 We proceed by induction on $r$. The case $r=2$ is trivial. So assume $R$ satisfies $(B_{r-1})$. 
 By Lemma \ref{brimpliesuniquemassey}, the Massey product $\langle a_1, \ldots, a_r \rangle$ is defined and contains only one element for any choice $a_1,\ldots, a_r \in \tor^S(R,k)$.
 Since $R$ is Golod, it follows by Theorem \ref{golodiffmasseytrivial} that this element must be zero and so $R$ satisfies $(B_r)$.
\end{proof}

In general it is very hard to study Massey products directly. However, $A_{\infty}$-algebras provide a systematic way of studying Massey products in view of the following theorem.

\begin{theorem}[\cite{lupalmieriwuzhang2009}, Theorem 3.1]
\label{ainfinitymasseyproductsaremasseyproducts}
 Let $A$ be a differential graded algebra. Up to a sign, the higher $A_{\infty}$-multiplications $\mu'_n$ on $HA$ from Theorem \ref{homotopytransfertheorem} give Massey products. 
 That is to say, if $\alpha_1, \ldots, \alpha_n \in HA$ are homology classes such that the Massey product $\langle \alpha_1, \ldots, \alpha_n \rangle$ is defined then
$$\pm \mu'_n(\alpha_1 \otimes \cdots \otimes \alpha_n) \in \langle \alpha_1, \ldots, \alpha_n \rangle.$$
\end{theorem}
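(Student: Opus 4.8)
The plan is to realize the transferred $A_{\infty}$-structure on $HA$ through Merkulov's explicit model and then read off a defining system for $\langle\alpha_1,\ldots,\alpha_n\rangle$ directly from the contracting homotopy. Since $A$ is a dga over a field, I would first choose cycle representatives for a basis of $HA$ together with a splitting of $A$ into cycles, boundaries, and a complement; this produces a transfer diagram of the form \eqref{transferdiagram} with $B \cong HA$ (equipped with zero differential) sitting inside $A$ as a space of cycles, maps $i,p$, and a homotopy $\phi$. By the uniqueness clause of the Homotopy Transfer Theorem (Theorem \ref{homotopytransfertheorem}), the structure $\mu'_n$ agrees up to $A_{\infty}$-isomorphism with the Merkulov structure $\mu_n = p \circ \lambda_n \circ i^{\otimes n}$ of \eqref{merkulovmun}, and since both the Massey product and its indeterminacy are stable under such isomorphisms, it suffices to prove the statement for this model.

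The argument then proceeds by induction on $n$. The base case $n=2$ is immediate, since $\mu'_2 = H(\mu_2)$ is exactly the product on $HA$, which by definition is the length-two Massey product. For the inductive step I would, writing $a_{k-1,k} = i(\alpha_k)$ for the fixed cycle representatives, define the higher entries recursively by $a_{ij} = -\phi(z_{ij})$ where $z_{ij} = \sum_{i<k<j} \bar a_{ik} a_{kj}$ for $j-i \geq 2$. The point is that this recursion reproduces, up to sign, the terms $\phi\lambda_{j-i}$ appearing in Merkulov's formula \eqref{merkulovlambda}: the splitting of $\lambda_n$ over $s+t=n$ mirrors the bracketing in $z_{ij}$, with the factors $\phi\lambda_s$ and $\phi\lambda_t$ corresponding to the subsystem entries. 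Granting that $\{a_{ij}\}$ is a genuine defining system, the Massey product representative $\bigl[\sum_{0<i<n} \bar a_{0i} a_{in}\bigr]$ will coincide, up to an overall sign, with $p\lambda_n i^{\otimes n}(\alpha_1 \otimes \cdots \otimes \alpha_n) = \mu'_n(\alpha_1 \otimes \cdots \otimes \alpha_n)$, which is the claim.

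The work lies in verifying that $\{a_{ij}\}$ is really a defining system, i.e. that $d a_{ij} = z_{ij}$ for all proper subintervals. Using the relation $d\phi = ip - 1 - \phi d$ of the transfer diagram, I compute $d a_{ij} = z_{ij} - ip(z_{ij}) - \phi d(z_{ij})$; a telescoping computation with the lower relations shows $z_{ij}$ is a cycle, so $\phi d(z_{ij}) = 0$, and it remains to see that $ip(z_{ij}) = i[z_{ij}]$ vanishes. Here the hypothesis that $\langle\alpha_1,\ldots,\alpha_n\rangle$ is defined enters: the class $[z_{ij}]$ is precisely the value of the subsystem on $\langle\alpha_{i+1},\ldots,\alpha_j\rangle$, which by the inductive hypothesis is $\pm\mu'_{j-i}(\alpha_{i+1} \otimes \cdots \otimes \alpha_j)$ and lies in a proper sub-Massey product required to contain zero; arranging the homotopy data compatibly forces this subvalue to be zero, so $[z_{ij}] = 0$ and the relation holds.

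I expect the principal obstacle to be twofold. First, the sign bookkeeping: one must reconcile the signs $(-1)^{s+1}$ in \eqref{merkulovlambda} and the Stasheff signs $(-1)^{r+st}$ with the operation $\bar a = (-1)^{\deg a + 1} a$ used in the Massey product, and track how these interact under $p$ and $i$; this accounts for the ambiguous $\pm$ in the statement and requires a careful degree count. Second, and more conceptually, is the vanishing $[z_{ij}] = 0$ at each intermediate stage: being \emph{defined} only guarantees that each proper sub-product \emph{contains} zero, whereas the Merkulov value $\mu'_{j-i}$ is one particular element of it. Controlling this---by exploiting the freedom in $\phi$, or equivalently by arranging the representatives so that the intermediate transferred products vanish exactly, which is automatic in the presence of condition $(B_{r-1})$ via Lemma \ref{brimpliesuniquemassey}---is the crux on which the whole induction turns.
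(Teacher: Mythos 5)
The paper offers no proof of this statement—it is quoted from Lu--Palmieri--Wu--Zhang---so there is no internal argument to compare yours against. Your strategy (realize $\mu'_n$ via Merkulov's transfer diagram and read off a defining system $a_{ij} = -\phi(z_{ij})$ from the contracting homotopy) is the standard one and is essentially how the cited source proceeds; it is complete for $n=2$ and $n=3$, since for $n=3$ the only proper sub-products are the honest products $\alpha_1\alpha_2$ and $\alpha_2\alpha_3$, which vanish exactly because the triple product is defined.

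The gap is the one you flag yourself, and it is genuine rather than a sign issue. For $n \geq 4$ the relation $d a_{ij} = z_{ij}$ requires $ip(z_{ij}) = 0$, i.e.\ the vanishing of the \emph{particular} class $[z_{ij}] = \pm\mu'_{j-i}(\alpha_{i+1}\otimes\cdots\otimes\alpha_j)$, whereas the hypothesis that $\langle\alpha_1,\ldots,\alpha_n\rangle$ is defined only guarantees that the set $\langle\alpha_{i+1},\ldots,\alpha_j\rangle$ \emph{contains} zero; a sub-Massey product with nontrivial indeterminacy can contain zero while the Merkulov value is a nonzero element of it. Neither proposed repair closes this. Invoking Lemma \ref{brimpliesuniquemassey} imports the hypothesis $(B_{r-1})$, which is not part of the statement being proved (though it does hold in the paper's actual applications of the theorem, which is why the citation is harmless there). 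And ``exploiting the freedom in $\phi$'' to make every intermediate transferred product on every sub-tuple vanish simultaneously is itself a nontrivial claim requiring proof; even granting it, you would only obtain the conclusion for that specially chosen model, and transporting the value $\mu'_n(\alpha_1\otimes\cdots\otimes\alpha_n)$ back along an $A_\infty$-isomorphism introduces correction terms built from the lower $\mu'_k$ that you would still need to show lie in the indeterminacy of $\langle\alpha_1,\ldots,\alpha_n\rangle$. As written, the induction does not go through beyond $n=3$.
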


A map of $S$-modules $f\colon M \to N$ is said to be minimal if $f \otimes 1 \colon M \otimes_S k \to N \otimes_S k$ is zero. It is readily verified that $f$ is minimal if and only if $f$ maps into $(x_1, \ldots, x_m)N$.  
Using Theorem \ref{ainfinitymasseyproductsaremasseyproducts}, we can describe under what conditions the Massey products on $\tor^S(R,k)$ vanish. 

\begin{theorem}
 Let $R = S/I$ be a monomial ring with minimal free resolution $F$. Let $r \in \N$ and let $\mu_n$ be an $A_{\infty}$-structure on $F$ such that $F \otimes_S k$ and $K_R$ are quasi-isomorphic as $A_{\infty}$-algebras. Then $R$ satisfies $(B_r)$ if and only if $\mu_k$ is minimal for all $k \leq r$. 
\end{theorem}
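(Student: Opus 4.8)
The plan is to convert both conditions into statements about the induced operations $\bar\mu_n := \mu_n \otimes_S k$ on $F \otimes_S k$, and then to read these off against Massey products through the Homotopy Transfer Theorem. First I would record two elementary reductions. Since $F$ is minimal, $\mu_1 = d$ satisfies $d(F) \subseteq (x_1,\ldots,x_m)F$, so $\bar\mu_1 = 0$; hence $F \otimes_S k$ carries the zero differential and $H(F \otimes_S k) = F \otimes_S k = \tor^S(R,k)$. By the characterization of minimal maps recalled before the theorem, $\mu_k$ is minimal if and only if $\mu_k \otimes 1 = \bar\mu_k = 0$. Thus the right-hand condition ``$\mu_k$ minimal for all $k \le r$'' is exactly ``$\bar\mu_k = 0$ for all $2 \le k \le r$'', and $(F \otimes_S k, \bar\mu_n)$ is a minimal $A_{\infty}$-algebra.

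Next I would build the bridge to Massey products. Applying the Homotopy Transfer Theorem (Theorem \ref{homotopytransfertheorem}) to the dga $K_R$ yields a transferred structure $\mu'_n$ on $H(K_R) = \tor^S(R,k)$, and by Theorem \ref{ainfinitymasseyproductsaremasseyproducts} the $\mu'_n$ compute Massey products: $\pm\mu'_n(\alpha_1 \otimes \cdots \otimes \alpha_n) \in \langle \alpha_1,\ldots,\alpha_n\rangle$ whenever the right-hand side is defined. Since $F \otimes_S k \simeq K_R$ as $A_{\infty}$-algebras and $(F \otimes_S k, \bar\mu_n)$ is already minimal, the uniqueness clause of the Homotopy Transfer Theorem supplies an $A_{\infty}$-isomorphism $f$ between the two minimal models $(\tor^S(R,k), \bar\mu_n)$ and $(\tor^S(R,k), \mu'_n)$ lifting the identity on homology, so we may take $f_1 = \mathrm{id}$. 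The key comparison I would then prove is: \emph{if $\bar\mu_i = \mu'_i = 0$ for all $2 \le i < j$, then $\bar\mu_j = \mu'_j$.} This follows by expanding the Stasheff morphism identity \eqref{stasheffmorphismidentities} in degree $j$. With $\bar\mu_1 = \mu'_1 = 0$ and $f_1 = \mathrm{id}$, every term on the left vanishes except the $s=j$, $r=t=0$ term $f_1\bar\mu_j = \bar\mu_j$, and every term on the right vanishes except the $q=j$ term $\mu'_j(f_1^{\otimes j}) = \mu'_j$; each surviving competitor carries a factor $\bar\mu_s$ or $\mu'_q$ with index strictly below $j$, hence is zero. (For $j=2$ the hypothesis is vacuous, so $\bar\mu_2 = \mu'_2$ unconditionally, recovering the agreement of the two products.)

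With these pieces both implications are short inductions. For the forward direction, assume $R$ satisfies $(B_r)$. Then for each $2 \le j \le r$ the set $\langle \alpha_1,\ldots,\alpha_j\rangle$ is defined and equals $\{0\}$, so Theorem \ref{ainfinitymasseyproductsaremasseyproducts} forces $\mu'_j = 0$ for all $2 \le j \le r$; feeding this into the comparison and inducting on $j$ (base case $\bar\mu_2 = \mu'_2 = 0$) yields $\bar\mu_j = 0$, i.e. $\mu_j$ is minimal, for all $j \le r$. For the reverse direction, assume $\bar\mu_j = 0$ for all $2 \le j \le r$ and induct on $j$ to prove $(B_j)$ together with $\mu'_j = 0$. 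The base case $j=2$ is the vanishing of the product. Assuming $(B_{j-1})$ and $\mu'_i = 0$ for $i<j$, Lemma \ref{brimpliesuniquemassey} shows $\langle \alpha_1,\ldots,\alpha_j\rangle$ is a singleton; the comparison gives $\mu'_j = \bar\mu_j = 0$, and Theorem \ref{ainfinitymasseyproductsaremasseyproducts} places $0$ inside this singleton, so $\langle \alpha_1,\ldots,\alpha_j\rangle = \{0\}$ and $(B_j)$ holds.

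The step I expect to be the main obstacle is the comparison itself: making precise that the two minimal models may be linked by an $A_{\infty}$-isomorphism with $f_1 = \mathrm{id}$, and then bookkeeping the Stasheff morphism identity so that all cross-terms drop out under the inductive hypothesis. The Koszul signs in the ``$\pm$'' of Theorem \ref{ainfinitymasseyproductsaremasseyproducts} and in \eqref{stasheffmorphismidentities} are a nuisance but ultimately harmless, since only vanishing is at stake. A secondary point requiring care is that Lemma \ref{brimpliesuniquemassey} is genuinely needed in the reverse direction, precisely to upgrade ``the Massey product contains $0$'' to ``it contains only $0$''.
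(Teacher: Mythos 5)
Your proof is correct and follows essentially the same route as the paper: translate minimality of $\mu_k$ into vanishing of $\mu_k \otimes 1$ on $F \otimes_S k$, and use the Homotopy Transfer Theorem together with Theorem \ref{ainfinitymasseyproductsaremasseyproducts} to identify these operations with Massey products on $\tor^S(R,k)$. The only substantive difference is that where the paper simply asserts one may take the transferred structure $\mu'_n$ to be $\mu_n \otimes 1$, you justify this with an inductive comparison of the two minimal models via the Stasheff morphism identities --- a worthwhile elaboration, since uniqueness in the Homotopy Transfer Theorem holds only up to $A_{\infty}$-isomorphism.
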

\begin{proof}
Since $\mu_n$ is an $A_{\infty}$-structure on $F$, it follows that $\mu_n \otimes 1$ is an $A_{\infty}$-structure on $F \otimes_S k$. Now, assume $\mu_n$ is minimal for all $k \leq r$. 
Since $\tor^S(R,k)$ is the homology of the $A_{\infty}$-algebra $F \otimes_S k$ the homotopy transfer theorem (Theorem \ref{homotopytransfertheorem}) implies that $\tor^S(R,k)$ inherits an $A_{\infty}$-structure $\mu'_n$. 
Since $F$ is minimal, $\tor^S(R,k)$ is isomorphic to $F \otimes_S k$ and we can take $\mu'_n = \mu_n \otimes 1$. Let $k \leq r$ and let $\alpha_1, \ldots, \alpha_k \in \tor^S(R,k)$ be such that the Massey product $\langle \alpha_1,\ldots, \alpha_k \rangle$ is defined. By Theorem \ref{ainfinitymasseyproductsaremasseyproducts} we have
$$\pm (\mu_k \otimes 1)(\alpha_1, \ldots, \alpha_k) \in \langle \alpha_1, \ldots, \alpha_k \rangle.$$
Since $\mu_k$ is minimal, we have $(\mu_k \otimes 1)(\alpha_1, \ldots, \alpha_k)=0$. Therefore, $\langle \alpha_1,\ldots, \alpha_k \rangle$ is trivial and so $R$ satisfies $(B_r)$. \\
Conversely, assume that $R$ satisfies $(B_r)$. We need to show that $\mu_k$ is minimal for all $k \leq r$. 
For $k=2$, we have $(\mu_2 \otimes 1)(a_1, a_2) = a_1a_2$ but the product on $\tor^S(R,k)$ is zero as $R$ satisfies $(B_r)$.
Now, let $3 \leq k \leq r$. Since $R$ satisfies $(B_k)$, for all $a_1, \ldots, a_k$ the Massey product $\langle a_1, \ldots, a_k \rangle$ is defined and contains only zero.
Since $(\mu_k \otimes 1)(a_1, \ldots, a_k) \in \langle a_1, \ldots, a_k \rangle$ we have $(\mu_k \otimes 1)(a_1, \ldots, a_k) = 0$ for all $a_1, \ldots, a_k$. 
Consequently, $\mu_k$ is minimal as required. 
\end{proof}

\begin{corollary}
\label{munminimalimpliesgolod}
Let $R = S/I$ be a monomial ring with minimal free resolution $F$. Let $\mu_n$ be an $A_{\infty}$-structure on $F$ such that $F \otimes_S k$ and $K_R$ are quasi-isomorphic as $A_{\infty}$-algebras. Then $R$ is Golod if and only if $\mu_n$ is minimal for all $n \geq 1$.   
\end{corollary}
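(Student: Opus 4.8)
The plan is to derive this corollary immediately by combining the theorem just proved with the earlier lemma that characterizes Golodness through the conditions $(B_r)$. The only thing to manage is a quantifier: the theorem fixes a single $r$ and relates $(B_r)$ to minimality of $\mu_k$ for $k \le r$, whereas both the Golod property and the conclusion ``$\mu_n$ minimal for all $n$'' are statements ranging over all $n$. So the argument is simply a matter of letting $r$ vary over $\N$.

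First I would treat the forward direction by assuming $R$ is Golod. By the lemma relating Golodness to $(B_r)$, the ring $R$ then satisfies $(B_r)$ for every $r \in \N$. Fixing an arbitrary $n \ge 1$ and applying the preceding theorem with $r = n$, condition $(B_n)$ forces $\mu_k$ to be minimal for all $k \le n$; in particular $\mu_n$ is minimal. Since $n$ was arbitrary, $\mu_n$ is minimal for all $n \ge 1$.

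Conversely, suppose $\mu_n$ is minimal for every $n \ge 1$. Then for each fixed $r$, all of $\mu_1, \ldots, \mu_r$ are minimal, so the preceding theorem yields that $R$ satisfies $(B_r)$. As this holds for every $r \in \N$, the lemma gives that $R$ is Golod, completing the equivalence.

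There is essentially no obstacle beyond this bookkeeping, since the substantive work has already been carried out in the theorem. The one point worth noting is the boundary case $n = 1$: here $\mu_1 = d$, and minimality of $\mu_1$ is automatic because $F$ is the minimal free resolution, so $d(F) \subseteq (x_1, \ldots, x_m)F$. Thus the content of ``$\mu_n$ minimal for all $n \ge 1$'' really resides in the higher multiplications $\mu_n$ with $n \ge 2$, consistent with the fact that $(B_2)$ is precisely the statement that the product $\mu_2$ on $\tor^S(R,k)$ vanishes.
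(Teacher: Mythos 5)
Your argument is correct and is exactly the intended derivation: the paper states this as an immediate corollary, obtained by letting $r$ range over $\N$ in the preceding theorem and invoking the lemma that $R$ is Golod if and only if it satisfies $(B_r)$ for all $r$. Your additional remark about $\mu_1 = d$ being automatically minimal for the minimal resolution is accurate and harmless.
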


Corollary \ref{munminimalimpliesgolod} was first proved in \cite{burke2015} using different methods. 

The following immediate corollary to Theorem \ref{munminimalimpliesgolod} is well-known, see for example Proposition 5.2.4(4) of \cite{avramov2010} where it is proved using different methods. 

\begin{corollary}[\cite{avramov2010}, Proposition 5.2.4(4)]
\label{dgagolodproducttrivial}
Let $R = S/I$ be a monomial ring with minimal free resolution $F$. If $F$ admits the structure of a dga, then $R$ is Golod if and only if the product on $\tor^S(R,k)$ vanishes.
\end{corollary}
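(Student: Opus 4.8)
The plan is to deduce this from Corollary \ref{munminimalimpliesgolod} by regarding the dga structure on $F$ as a strict $A_{\infty}$-structure and showing that minimality of all the operations collapses to a single condition. Concretely, if $F$ carries a dga structure with differential $d$ and product $m$, I would set $\mu_1 = d$, $\mu_2 = m$, and $\mu_n = 0$ for all $n \geq 3$. For this family the Stasheff identities \eqref{stasheffidentities} reduce to $d^2=0$, the Leibniz rule, and associativity, while every identity with $n \geq 3$ holds because each of its terms contains a factor $\mu_{\geq 3}=0$; hence this is a genuine $A_{\infty}$-structure on $F$.

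To invoke Corollary \ref{munminimalimpliesgolod} I must first verify its hypothesis, namely that $F \otimes_S k$ and $K_R$ are quasi-isomorphic as $A_{\infty}$-algebras. For this I would reproduce, with $F$ in place of the Taylor resolution $T$, the zig-zag
\[
F \otimes_S k \longleftarrow F \otimes_S K_S \longrightarrow R \otimes_S K_S = K_R,
\]
where the left arrow is induced by the augmentation $K_S \to k$ and the right arrow by the augmentation $F \to R$. Both are morphisms of dgas. Since $F$ and $K_S$ consist of free $S$-modules, a spectral-sequence argument on the double complex $F \otimes_S K_S$ shows that each map induces an isomorphism onto $\tor^S(R,k)$ on homology, so both are dga quasi-isomorphisms. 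All three dgas are $k$-algebras and $k$ is a field, so this zig-zag of dga quasi-isomorphisms yields the required $A_{\infty}$-quasi-isomorphism between $F \otimes_S k$ and $K_R$.

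With the hypothesis in place, Corollary \ref{munminimalimpliesgolod} gives that $R$ is Golod if and only if $\mu_n$ is minimal for every $n \geq 1$. Here $\mu_1 = d$ is minimal because $F$ is a minimal resolution, that is $d(F) \subseteq (x_1,\ldots,x_m)F$, and each $\mu_n = 0$ for $n \geq 3$ is trivially minimal. Thus minimality of all the $\mu_n$ is equivalent to minimality of $\mu_2$ alone. Finally, since $F$ is minimal the induced differential on $F \otimes_S k$ vanishes, so $\tor^S(R,k) = F \otimes_S k$ and the product on $\tor^S(R,k)$ is exactly $\mu_2 \otimes 1$. Hence $\mu_2$ is minimal if and only if $\mu_2 \otimes 1 = 0$, i.e. if and only if the product on $\tor^S(R,k)$ vanishes, completing the chain of equivalences.

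The only non-formal point is the verification of the $A_{\infty}$-quasi-isomorphism hypothesis; everything else is bookkeeping. I expect this to be the main obstacle, but it is handled exactly as in the Taylor case already recorded in the excerpt: one only needs to confirm that the two augmentation maps out of $F \otimes_S K_S$ remain quasi-isomorphisms when $T$ is replaced by the minimal resolution $F$, which follows from freeness of the modules involved.
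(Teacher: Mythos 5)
Your proof is correct and takes essentially the same route as the paper: the paper states this corollary as an immediate consequence of Corollary \ref{munminimalimpliesgolod}, and your argument---viewing the dga as a strict $A_{\infty}$-structure with $\mu_n=0$ for $n\geq 3$, verifying the quasi-isomorphism hypothesis by the same zig-zag the paper uses for the Taylor resolution, and observing that minimality of all $\mu_n$ collapses to minimality of $\mu_2$, which is equivalent to vanishing of the product on $\tor^S(R,k)$---is exactly that deduction with the implicit details filled in.
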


\section{Homotopy transfer on the Taylor resolution}
Theorem \ref{munminimalimpliesgolod} implies that monomial rings with minimal dg algebra resolution are Golod if and only if the product on $\tor^S(S/I,k)$ vanishes. 
However, there exists monomial rings whose minimal resolution does not admit the structure of a dg algebra \cite{avramov1981}. On the other hand, every free resolution of a monomial ring $S/I$ admits an $A_{\infty}$-structure \cite{burke2015}.

In general, it is not clear how to obtain an explicit description of such an $A_{\infty}$-structure. Instead of considering general $A_{\infty}$-structures on resolutions, we will consider only those that arise as a deformation of the dg algebra structure on the Taylor resolution. To make this idea precise we will use rooting maps to construct transfer diagrams on the Taylor resolution.
In that case Theorem \ref{merkulovtheorem} tells us how to construct an $A_{\infty}$-structure to which we may apply Theorem \ref{munminimalimpliesgolod}. \\

Let $\pi$ be a rooting map and let $F$ be the free resolution of $S/I$ associated to $RC(L,\pi)$. Recall that $F_n$ is the free $S$-module on $u_J$ where $J \in RC(L, \pi)$ with $\vert J \vert =n$. 
The remainder of this section is devoted to computing an explicit $A_{\infty}$-algebra structure on $F$.
Let $T$ will denote the Taylor resolution of $S/I$. We will write $d$ for the differential of $F$ whereas $\partial$ will be reserved for the ``simplicial'' differential, i.e.
$$\partial u_J = \sum_{i=1}^{\vert J \vert} (-1)^{i+1} u_{J^i}$$
on a basis set $u_J$ of $F$.
If $u_J$ is a basis set of $F$ we define $[u_J] = \frac{1}{m_J}u_J$. Let $u_{J_1}, \ldots, u_{J_n}$ be rooted sets and $\alpha_1, \ldots, \alpha_n \in S$. 
Then for $u = \sum \alpha_k u_{J_k}$, we set $[u] = \sum \frac{\alpha_k}{m_{J_k}} u_{J_k}$.
The following lemma will be used extensively.

\begin{lemma}
For any basis set $u_J$ of $F$ we have $d[u_J] = [\partial u_J]$.
\end{lemma}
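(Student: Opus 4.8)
The plan is to prove the identity by a direct term-by-term computation, since both sides are defined by explicit formulas on basis elements. The first thing I would do is fix the ambient setting: the expressions $[u_J] = \frac{1}{m_J}u_J$ live not in $F$ itself but in the localization (or the free module over the fraction field of $S$) in which every monomial $m_J$ is invertible. The differential $d$ extends uniquely and $S$-linearly to this localization, so $d\bigl(\tfrac{1}{m_J}u_J\bigr) = \tfrac{1}{m_J}\,d(u_J)$ is legitimate. I would also record the elementary divisibility observation that underlies the whole construction: since $J^i \subseteq J$, the lcm $m_{J^i}$ of a subset divides $m_J$, so each coefficient $\frac{m_J}{m_{J^i}}$ appearing in $d(u_J)$ is a genuine monomial of $S$ and no denominators are introduced spuriously.

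Next I would expand the left-hand side using the definition of the differential of $F_{\Delta}$:
\[
d[u_J] = \frac{1}{m_J}\,d(u_J) = \frac{1}{m_J}\sum_{i=1}^{\vert J\vert}(-1)^{i+1}\frac{m_J}{m_{J^i}}\,u_{J^i} = \sum_{i=1}^{\vert J\vert}(-1)^{i+1}\frac{1}{m_{J^i}}\,u_{J^i},
\]
where the factor $m_J$ cancels in each summand.

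Then I would expand the right-hand side by applying the bracket operation to the element $\partial u_J = \sum_i (-1)^{i+1}u_{J^i}$. Reading off the coefficients $\alpha_i = (-1)^{i+1}$ attached to the basis sets $u_{J^i}$ and using the defining rule $[\sum_k \alpha_k u_{J_k}] = \sum_k \frac{\alpha_k}{m_{J_k}}u_{J_k}$, I obtain
\[
[\partial u_J] = \Bigl[\sum_{i=1}^{\vert J\vert}(-1)^{i+1}u_{J^i}\Bigr] = \sum_{i=1}^{\vert J\vert}(-1)^{i+1}\frac{1}{m_{J^i}}\,u_{J^i}.
\]
The two expressions agree summand by summand, which proves $d[u_J] = [\partial u_J]$.

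I do not expect any genuine obstacle here: the content of the lemma is precisely that the monomial coefficients $\frac{m_J}{m_{J^i}}$ in the differential $d$ are exactly what is needed to turn the plain simplicial boundary $\partial$ into $d$ after normalizing by the multidegrees. The only points requiring care are purely bookkeeping — ensuring the signs $(-1)^{i+1}$ from $d$ and from $\partial$ are the same (they are, by construction, since the indexing $J^i$ and the orientation convention coincide), and applying the bracket to a \emph{sum} of basis sets rather than a single one, which is handled by its stated linearity-like definition. The well-definedness of $[\cdot]$ is immediate from the fact that the $u_{J_k}$ form a basis.
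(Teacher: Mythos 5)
Your proof is correct and follows essentially the same route as the paper's: expand $d[u_J]$ by linearity, cancel the factor $m_J$, and match the result term by term with $[\partial u_J]$. The extra remarks about the localization and the divisibility $m_{J^i} \mid m_J$ are sensible bookkeeping but do not change the argument.
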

\begin{proof}
We have
\begin{equation*}
\begin{split}
d[u_J] &= \frac{1}{m_J} du_J =\frac{1}{m_J} \sum_{i=1}^{\vert J \vert} (-1)^{i+1} \frac{m_J}{m_{J^i}} u_{J^i}\\
&= \sum_{i=1}^{\vert J \vert} (-1)^{i+1} \frac{1}{m_{J^i}} u_{J^i} = \sum_{i=1}^{\vert J \vert} (-1)^{i+1} [u_{J^i}] \\
&=[\partial u_J].
\end{split}
\end{equation*}
\end{proof}
Let $\pi$ be a rooting map. For $u_J \in T$, define $\pi(u_J) = u_i$ if $\pi(m_J) = m_i$.
Define a map $p'\colon T \to F$ as follows. Let $u \in T$ and write $u = u_{i_1} \cdots u_{i_k}$. 
For $q=1,\ldots,k$ define $I_q = \lbrace i_1,\ldots,i_q \rbrace$. For a permutation $\sigma \in S_k$, put $\sigma I_q = \{ i_{\sigma(1)}, \ldots ,i_{\sigma(q)} \}$. We define
\begin{equation}
\label{definitionp'}
 p'(u) = \sum_{\sigma \in S_k} \sgn(\sigma) \pi(u_{\sigma I_1})\pi(u_{\sigma I_2}) \cdots \pi(u_{\sigma I_k}).  
\end{equation}
Geometrically, the map $p'$ can be thought of as similar to the barycentric subdivision of a simplex. For example, if $u_{i_1,i_2} \in T$ and we think of $\pi(u_{i_1,i_2})$ as its barycenter then $p'$ replaces $u_{i_1,i_2}$ by its barycentric subdivision
$$p'(u_{i_1,i_2}) = u_{i_2}\pi(u_{i_1,i_2}) - u_{i_1}\pi(u_{i_1,i_2}).$$
In the same way, given $u_{i_1,i_2,i_3} \in T$ the right hand terms in
$$p(u_{i_1,i_2,i_3}) = \sum_{\sigma \in S_3} \sgn(\sigma) \pi(u_{i_{\sigma(1)}})\pi(u_{i_{\sigma(1)},i_{\sigma(2)}})\pi(u_{i_{\sigma(1)},i_{\sigma(2)}, i_{\sigma(3)}})$$
are precisely the six constituent triangles in the barycentric subdivision of a $2$-simplex. Before proceeding, we need to verify that $\im(p') \subseteq F$. Let $\sigma \in S_k$, we need to show that
$$\pi(u_{\sigma I_1})\pi(u_{\sigma I_2}) \cdots \pi(u_{\sigma I_k})$$
is rooted. Since $u_{\sigma I_1} \subseteq u_{\sigma I_2} \subseteq \cdots \subseteq u_{\sigma I_k}$, it follows that for all $j_1, \ldots, j_k$ we have
$$\pi ( \pi(u_{I_{j_1}}), \pi(u_{I_{j_2}}),\ldots,\pi(u_{I_{j_k}})) = \pi(u_{I_{j_k}}).$$
Therefore, $\pi(u_{\sigma I_1})\pi(u_{\sigma I_2}) \cdots \pi(u_{\sigma I_k})$
is rooted and so $\im(p') \subseteq F$. 

\begin{lemma}
\label{pprimechainmappartial}
The map $p'$ is a chain map with respect to the differential $\partial$. 
\end{lemma}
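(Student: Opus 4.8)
The plan is to recognize $p'$ as an algebraic version of the barycentric subdivision operator and to mimic the classical proof that subdivision commutes with the boundary. Since $\partial$ and $p'$ are both $S$-linear, it suffices to verify $\partial p'(u_J) = p'(\partial u_J)$ on each basis element $u_J$, and I would do this by induction on $k = \vert J \vert$. The base case $k \le 1$ is immediate: $p'(u_i) = u_i$ while $\partial u_i = u_{\emptyset}$, so both sides equal $u_{\emptyset}$. Throughout, the products in the definition of $p'$ are taken in the underlying exterior algebra $E$ (so that $\pi(u_{\sigma I_1})\cdots \pi(u_{\sigma I_k}) = \pm u_{A}$ for the rooted set $A$ of roots); this is exactly the product for which $\partial$ is a graded derivation with $\partial u_i = u_{\emptyset} = 1$, and the whole argument rests on this feature.

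The key step is a coning recursion. For every $\sigma \in S_k$ the top set of the flag is $\sigma I_k = J$, so the last factor $\pi(u_{\sigma I_k}) = \pi(u_J) =: b$ is independent of $\sigma$. Grouping the permutations according to their last value $\sigma(k) = l$, the remaining values range over all orderings of $J^l = J \setminus \{i_l\}$, and the partial products $\pi(u_{\sigma I_1}) \cdots \pi(u_{\sigma I_{k-1}})$ reassemble precisely into $p'(u_{J^l})$. Tracking the sign via $\sgn(\sigma) = (-1)^{k-l}\sgn(\tau)$, where $\tau$ is the induced permutation of $J^l$, and using $(-1)^{k-l} = (-1)^{k-1}(-1)^{l+1}$, this yields
\[
p'(u_J) = \sum_{l=1}^{k} (-1)^{k-l}\, p'(u_{J^l}) \cdot b = (-1)^{k-1} \Big( \sum_{l=1}^{k} (-1)^{l+1} p'(u_{J^l}) \Big) \cdot b = (-1)^{k-1}\, p'(\partial u_J) \cdot b,
\]
exhibiting $p'(u_J)$ as a signed cone of $p'(\partial u_J)$ over the barycenter $b$.

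With the recursion in hand, I would apply $\partial$ and use that it is a graded derivation. Since $p'(\partial u_J)$ has degree $k-1$,
\[
\partial p'(u_J) = (-1)^{k-1}\Big[ \big(\partial\, p'(\partial u_J)\big)\cdot b + (-1)^{k-1} p'(\partial u_J)\cdot (\partial b) \Big].
\]
The inductive hypothesis applies to the degree-$(k-1)$ element $\partial u_J$ and gives $\partial\, p'(\partial u_J) = p'(\partial\partial u_J) = p'(0) = 0$, so the first term drops out; the second uses $\partial b = u_{\emptyset} = 1$, as $b$ is a single generator. Hence $\partial p'(u_J) = (-1)^{2(k-1)} p'(\partial u_J) = p'(\partial u_J)$, completing the induction.

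The substantive part of the argument is the sign bookkeeping: one must check that the permutation signs reorganize exactly into the factor $(-1)^{k-1}$ in the coning recursion, and that this factor squares against the Koszul sign $(-1)^{k-1}$ from the derivation rule to give $+1$ rather than merely $\pm 1$. The one conceptual point to settle first is that the multiplications defining $p'$ are the exterior products (equivalently, one works in the normalized basis $[u_J]$, where the $\lcm$-weights of the Taylor product disappear and $d[u_J] = [\partial u_J]$), since it is precisely this product for which $\partial$ is a derivation with $\partial u_i = 1$.
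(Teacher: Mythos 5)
Your proof is correct, but it takes a genuinely different route from the paper's. The paper argues by brute force on both sides: it expands $\partial p'(u_I)$ as a double sum over $\sigma \in S_k$ and omitted positions $j$, kills all terms with $j<k$ by pairing $\sigma$ with $\sigma$ composed with the adjacent transposition of positions $j,j+1$, and then matches the surviving sum term-by-term with $p'(\partial u_I)$ via the explicit bijection $\sigma \mapsto \sigma\rho$ with $\rho = (j\cdots k)$. You instead isolate the cone recursion $p'(u_J) = (-1)^{k-1}\,p'(\partial u_J)\cdot \pi(u_J)$ (your sign bookkeeping $\sgn(\sigma)=(-1)^{k-l}\sgn(\tau)$ checks out, as does the base case $\pi(m_i)=m_i$, which uses that the generating set is minimal) and then induct using that $\partial$ is a graded derivation with $\partial u_i = 1$; the two Koszul signs $(-1)^{k-1}$ cancel exactly as you say. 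Your premise that the products in $p'$ are exterior products without lcm weights is the correct reading of the paper (this is why $p$ is later defined as $m_J[p'(u_J)]$), and the derivation identity you invoke is the same expansion the paper uses when it applies $\partial$ to $\pi(u_{\sigma I_1})\cdots\pi(u_{\sigma I_k})$, so neither argument is assuming more than the other. What each buys: your argument is shorter on permutation combinatorics, makes the barycentric-subdivision heuristic that the paper only gestures at into the actual engine of the proof, and your recursion is essentially the content of the paper's subsequent Lemma \ref{piuip=ipu} ($\pi(u)\,ip'\partial u = ip'u$), which you would get for free; the paper's computation, on the other hand, produces the explicit closed formula $\partial p'(u_I) = \sum_{\sigma}(-1)^{k+1}\sgn(\sigma)\pi(u_{\sigma I_1})\cdots\pi(u_{\sigma I_{k-1}})$, which it reuses verbatim in the proof of that same lemma.
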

\begin{proof}
It is sufficient to prove the result for basis elements $u_I \in T$. Write $I = \lbrace i_1, \ldots, i_k \rbrace$.
We first show that
$$\partial p'(u_I) = \sum_{\sigma \in S_k} (-1)^{k+1} \sgn(\sigma) \pi(u_{\sigma I_1}) \cdots \pi(u_{\sigma I_{k-1}}).$$
We have
\begin{equation*}
\begin{split}
\partial p'(u_I) &= \sum_{\sigma \in S_k} \sgn(\sigma)  \partial \big( \pi(u_{\sigma I_1}) \cdots \pi(u_{\sigma I_k})\big) \\
&= \sum_{\sigma \in S_k} \sum_{j=1}^k (-1)^{j+1} \sgn(\sigma)  \pi(u_{\sigma I_1}) \cdots  \widehat{\pi(u_{\sigma I_j})} \cdots \pi(u_{\sigma I_k}).
\end{split}
\end{equation*}
Now, fix some $j<k$ and let $\tau_j$ be the transposition $(\sigma(j), \sigma(j+1))$. Then the summands indexed by $\sigma$ and $\tau_j\sigma$ cancel. 
Indeed, if $q < j$ then $\tau_j$ acts as the identity on $\sigma I_q$ and so $u_{\sigma I_q} = u_{\tau_j \sigma I_q}$. On the other hand, if $q  \geq j+1$ then the underlying sets of $\sigma I_q$ and $\tau_j \sigma I_q$ are the same. Since $\pi(u_J)$ depends only on the set $J$ and not on the ordering we have 
$$\pi(u_{\sigma I_q}) = \pi(u_{\tau_j \sigma I_q})$$
and so the summands indexed by $\sigma$ and $\tau_j\sigma$ cancel. Note that since the map $\sigma \to \tau_j \sigma$ is an involution these permutations cancel in pairs. Therefore, we obtain
$$\partial p'(u_I) = \sum_{\sigma \in S_k} (-1)^{k+1} \sgn(\sigma) \pi(u_{\sigma I_1}) \cdots \pi(u_{\sigma I_{k-1}}).$$
For $\sigma \in S_k$, write 
$$G_{\sigma} = \pi(u_{\sigma I_1}) \cdots \pi(u_{\sigma I_{k-1}})$$
and so
\begin{equation}
\label{gsigma}
\partial p'(u_I) = \sum_{\sigma \in S_k} (-1)^{k+1} \sgn(\sigma) G_{\sigma}.
\end{equation}
Next, we compute $p' \partial (u_I)$. For $j \in \{ 1,\ldots, k \}$ and $\sigma \in S_{k-1}$, set $I_q(j) = I_q \setminus \{ j \}$ and
$$F_{\sigma, j} = \pi( u_{\sigma I_1(j)} ) \cdots \pi( u_{\sigma I_{j-1}(j)} )\pi( u_{\sigma I_{j+1}(j)} ) \cdots \pi( u_{\sigma I_k(j)} ).$$
Then
\begin{equation}
\label{fsigmaj}p' \partial u = \sum_{j=1}^k (-1)^{j+1} p'(u_{I_k(j)}) = \sum_{j=1}^k \sum_{\sigma \in S_{k-1}} (-1)^{j+1} \sgn (\sigma) F_{\sigma,j}.
\end{equation}
Given $j \in \{ 1,\ldots, k \}$, we can embed $S_{k-1}$ into $S_k$ by fixing $j$. Therefore, we have
$$p' \partial u = \sum_{j=1}^k \sum_{\sigma \in S_{k-1}} (-1)^{j+1} \sgn (\sigma) F_{\sigma,j} = \sum_{j=1}^k \sum_{\substack{\sigma \in S_k \\ \sigma(j) = j}}(-1)^{j+1} \sgn (\sigma) F_{\sigma,j}.$$
Now, fix $j \in \{ 1,\ldots, k \}$ and fix $\sigma \in S_k$ such that $\sigma(j) = j$. Define $\rho$ to be the cycle $(j \cdots k)$ and let $\tau = \sigma \rho$. Then we have $G_{\tau} = F_{\sigma,j}$ and
$$(-1)^{k+1} \sgn (\tau) G_{\tau} = (-1)^{2k+j+1} G_{\sigma \rho} = (-1)^{j+1} \sgn (\sigma) F_{\sigma,j}.$$
Since both sums in \eqref{gsigma} and \eqref{fsigmaj} have $k!$ terms, it follows that they are equal.
\end{proof}

Let $i\colon F \to T$ denote the inclusion.

\begin{lemma}
\label{piuip=ipu}
For all $u \in T$, we have
$$\pi(u) ip'\partial u = ip' u.$$
\end{lemma}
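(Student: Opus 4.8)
The plan is to reduce to basis elements and then read off both sides from the computation already made in the proof of Lemma~\ref{pprimechainmappartial}. By linearity it suffices to treat $u = u_I$ for a basis set $I = \{i_1, \dots, i_k\}$ of $T$. The key first observation is that in the defining formula \eqref{definitionp'} the final factor of every summand is $\pi(u_{\sigma I_k})$, and since $\sigma I_k = I$ for all $\sigma \in S_k$ we have $\pi(u_{\sigma I_k}) = \pi(u_I) = \pi(u)$. Writing $G_\sigma = \pi(u_{\sigma I_1}) \cdots \pi(u_{\sigma I_{k-1}})$ as in that proof, this shows
$$i p'(u) = \sum_{\sigma \in S_k} \sgn(\sigma)\, G_\sigma \cdot \pi(u),$$
so the right-hand side of the lemma already has $\pi(u)$ as its rightmost factor.

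For the left-hand side I would use that $p'$ is a chain map for $\partial$ (Lemma~\ref{pprimechainmappartial}), so that $p'\partial u = \partial p' u$, whose value is recorded in equation \eqref{gsigma} of that proof; this gives
$$i p' \partial u = \sum_{\sigma \in S_k} (-1)^{k+1} \sgn(\sigma)\, G_\sigma \in T.$$
Multiplying on the left by the degree-one generator $\pi(u)$ and distributing produces the products $\pi(u) \cdot G_\sigma$. Here I invoke graded-commutativity of the Taylor product: each $\pi(u_{\sigma I_q})$ is a generator of degree one, so $G_\sigma$ has degree $k-1$ and sliding $\pi(u)$ past it costs precisely $(-1)^{k-1}$, that is $\pi(u)\cdot G_\sigma = (-1)^{k-1} G_\sigma \cdot \pi(u)$; the lcm-coefficients of the Taylor product are symmetric and so do not affect this sign. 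Since $(-1)^{k+1}(-1)^{k-1} = 1$, the resulting sum is $\sum_\sigma \sgn(\sigma)\, G_\sigma \cdot \pi(u)$, which matches the expression for $i p' u$ above term by term.

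The only subtle point is the sign bookkeeping, and the main thing I would verify is the consistency of the degenerate terms: if the root $\pi(u)$ coincides with one of the factors $\pi(u_{\sigma I_q})$ occurring in $G_\sigma$, then $G_\sigma \cdot \pi(u)$ contains a repeated degree-one generator and vanishes in $T$. This is harmless, since the corresponding summand of $i p' u$ in \eqref{definitionp'} is that same vanishing product, so the two sides still agree term by term. As a sanity check, the identity is exactly the geometric statement that the barycentric subdivision of a simplex is the cone, with apex at its barycenter, over the subdivided boundary, which is reflected in the appearance of $\pi(u)$ as a common factor.
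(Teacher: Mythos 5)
Your proof is correct and follows essentially the same route as the paper's: reduce to a basis element $u_I$, use the chain map property and the formula for $\partial p'(u_I)$ from Lemma~\ref{pprimechainmappartial}, then commute the degree-one element $\pi(u_I)$ past $G_\sigma$ at the cost of $(-1)^{k-1}$, which cancels $(-1)^{k+1}$, and identify $\pi(u_I)=\pi(u_{\sigma I_k})$. Your extra remark about the degenerate terms where $\pi(u)$ repeats a factor of $G_\sigma$ is a point the paper leaves implicit, and your resolution of it is correct.
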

\begin{proof}
It is sufficient to prove the result for basis elements $u_I \in T$. Write $I = \lbrace i_1, \ldots, i_k \rbrace$. As in the proof of Lemma \ref{pprimechainmappartial}, we have
$$\partial p'(u_I) = \sum_{\sigma \in S_k} (-1)^{k+1} \sgn(\sigma) \pi(u_{\sigma I_1}) \cdots \pi(u_{\sigma I_{k-1}}).$$
Since $p'$ is a chain map by Lemma \ref{pprimechainmappartial}, we have
\begin{equation*}
\begin{split}
\pi(u_I) ip' \partial u_I &= \pi(u_I) \partial ip'(u_I) \\
&= \pi(u_I) \sum_{\sigma \in S_k} (-1)^{k+1} \sgn(\sigma) \pi(u_{\sigma I_1}) \cdots \pi(u_{\sigma I_{k-1}}) \\
&= \sum_{\sigma \in S_k} (-1)^{k+1+k-1} \sgn(\sigma) \pi(u_{\sigma I_1}) \cdots \pi(u_{\sigma I_{k-1}}) \pi(u_I)\\
&= \sum_{\sigma \in S_k} \sgn(\sigma) \pi(u_{\sigma I_1}) \cdots \pi(u_{\sigma I_{k-1}}) \pi(u_\sigma I_k)\\
&= ip'(u_I)
\end{split}
\end{equation*}
where we have used that $\pi(u_I) = \pi(u_{I_k}) = \pi(u_{\sigma I_k})$. 
\end{proof}

\begin{lemma}
\label{ip'chainhomotopyto1}
 The composition $ip'$ is chain homotopic to $1_T$ as chain maps $(T,\partial) \to (T,\partial)$.
\end{lemma}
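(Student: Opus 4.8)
The plan is to build an explicit homotopy $\phi\colon T\to T$ of degree $+1$ with $\partial\phi+\phi\partial = ip'-1_T$, defined recursively in terms of the rooting map $\pi$. Conceptually, $(T,\partial)$ is (up to a degree shift) the reduced simplicial chain complex of the full simplex on $\{1,\dots,r\}$ tensored with $S$, hence acyclic, so abstractly $ip'\simeq 1_T$ is automatic; but since this homotopy is exactly the $\phi$ that will later be fed into Merkulov's formula, I want it in closed form. The two structural inputs I would exploit are that $\partial$ is a derivation for the (coefficient-free) simplicial product on $T$, so that for a single vertex $v$ one has $\partial(v\cdot w)=w-v\cdot\partial w$, together with Lemma \ref{piuip=ipu}.

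Concretely, I would put $\phi(u_\emptyset)=0$ and, for a basis element $u_I$ with $|I|\ge 1$,
$$\phi(u_I)=\pi(u_I)\cdot\big(ip'(u_I)-u_I-\phi(\partial u_I)\big),$$
where $\pi(u_I)$ is the vertex cutting out the root of $I$ and the product is the simplicial product. Using Lemma \ref{piuip=ipu} in the form $ip'(u_I)=\pi(u_I)\cdot ip'(\partial u_I)$ together with $\pi(u_I)\cdot\pi(u_I)=0$, the term involving $p'$ drops out and this collapses to the closed form
$$\phi(u_I)=-\,\pi(u_I)\cdot\big(u_I+\phi(\partial u_I)\big),$$
so that the recursion never actually refers to $p'$ and is manifestly built from $\pi$ alone.

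The verification goes by induction on $|I|$. The base cases $|I|\le 1$ are immediate from $p'(u_\emptyset)=u_\emptyset$ and $\pi(u_i)=u_i$ (so $p'(u_i)=u_i$ and $\phi(u_i)=0$). For the inductive step write $w=ip'(u_I)-u_I-\phi(\partial u_I)$, so $\phi(u_I)=\pi(u_I)\cdot w$. First I would check that $w$ is a $\partial$-cycle: since $p'$ is a chain map (Lemma \ref{pprimechainmappartial}) and $\partial^2=0$, the inductive hypothesis applied to $\partial u_I$ gives $\partial\phi(\partial u_I)=ip'(\partial u_I)-\partial u_I$, whence $\partial w=0$. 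Then, using $\partial(\pi(u_I))=u_\emptyset$ and the derivation property with $\partial w=0$, I get $\partial\phi(u_I)=\partial(\pi(u_I)\cdot w)=w-\pi(u_I)\cdot\partial w=w$, which rearranges to $\partial\phi(u_I)+\phi(\partial u_I)=ip'(u_I)-u_I$, completing the induction.

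The main difficulty is bookkeeping rather than conceptual. I must confirm that $\partial$ is genuinely a derivation for the coefficient-free product (so the cone identity $\partial(v\cdot w)=w-v\cdot\partial w$ holds with the correct Koszul signs), and I must ensure the recursive correction $w$ is exactly a cycle, since that is precisely what lets the cone $\pi(u_I)\cdot w$ close up the homotopy. Lemma \ref{piuip=ipu} is the decisive input: it is what guarantees that the $ip'$-contribution is absorbed by the apex via $\pi(u_I)\cdot\pi(u_I)=0$, keeping $\phi$ expressible purely through $\pi$ and hence usable in the later Merkulov construction.
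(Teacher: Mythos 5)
Your proof is correct and follows essentially the same route as the paper: the paper also defines the homotopy recursively by coning off with apex $\pi(u_I)$, namely $\phi'_k(u)=\pi(u)\bigl(u-\phi'_{k-1}(\partial u)\bigr)$, which is exactly the negative of your collapsed formula, and verifies the homotopy identity by induction on $|I|$ using Lemma \ref{piuip=ipu}. The only difference is the overall sign convention (you prove $\partial\phi+\phi\partial=ip'-1_T$ while the paper's proof establishes $1_T-ip'=\partial\phi'+\phi'\partial$), which is immaterial since either sign of $\phi$ exhibits the homotopy.
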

\begin{proof}
 Define $\phi'\colon T \to T$ by induction as follows. Set $\phi'_0 = \phi'_1=0$ and 
$$\phi'_2(u_{i_1}u_{i_2}) = \pi(u_{i_1,i_2}) u_{i_1} u_{i_2}.$$
For $k>2$, write $u = u_{i_1} \cdots u_{i_k}$ and define
$$\phi'_k(u) = \pi(u)\big( u - \phi'_{k-1}(\partial u) \big).$$
We need to show that $1_T - ip' = \partial \phi' + \phi' \partial$. We proceed by induction on $k$.
If $k=1$, there is nothing to prove. If $k=2$, we have
\begin{equation*}
\partial \phi_2(u_{i_1}u_{i_2}) = \partial(\pi(u_{i_1,i_2}) u_{i_1} u_{i_2}) = u_{i_1}u_{i_2} - \pi(u_{i_1,i_2})u_{i_2} + \pi(u_{i_1,i_2}) u_{i_1} = (1_F - ip')(u_{i_1}u_{i_2}). 
\end{equation*}
Now, let $k>2$. Using Lemma \ref{piuip=ipu}, we get
\begin{equation*}
\begin{split}
\partial \phi'_k(u) &= u-\phi'_{k-1} \partial u - \pi(u)(\partial u - \partial \phi'_{k-1} \partial u) \\
&= u-\phi'_{k-1} \partial u - \pi(u) \big( \partial u - \partial u + ip' \partial u + \phi_{k-2} \partial^2 u \big) \\
&= u - \phi_{k-1} \partial u - \pi(u) ip'\partial u \\
&= u-ip'u - \phi_{k-1} \partial u
\end{split}
\end{equation*}
which finishes the proof. 
\end{proof}
Define a map $p\colon T \to F$ as follows. For $u_J \in T$, define
\begin{equation}
  \label{definitionp}
  p(u_J) = m_J [p'(u_J)]
\end{equation}
 where $p'$ is the map from (\ref{definitionp'}). Then we have the following theorem. 
\begin{theorem}
\label{rootedresolutionsaresdrtaylor}
 Let $\pi$ be a rooting map for a monomial ideal $I$ and let $F$ be the resolution of $S/I$ associated to $\pi$. Then there exists a transfer diagram
 \begin{center}
\begin{tikzpicture}
\matrix(m)[matrix of math nodes,
row sep=3em, column sep=2.8em,
text height=1.5ex, text depth=0.25ex]
{F &T\\};
\path[->]
(m-1-1) edge [bend left=35] node[yshift=1.5ex] {$i$} (m-1-2)
(m-1-2) edge [bend left=35] node[yshift=-1.5ex] {$p$} (m-1-1)
(m-1-2) edge [loop right, in=35,out=-35,looseness=5, min distance=10mm] node {$\phi$} (m-1-2)
;
\end{tikzpicture}
\end{center}
where $i\colon F \to T$ is the inclusion and $p\colon T \to F$ is the map from (\ref{definitionp}). 
\end{theorem}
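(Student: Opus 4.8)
The plan is to construct the homotopy $\phi$ from the map $\phi'$ of Lemma~\ref{ip'chainhomotopyto1} by the same normalization device used to pass from $p'$ to $p$, and then to transport the two defining relations of a transfer diagram from the ``simplicial'' differential $\partial$ to the Taylor differential $d$ using the identity $d[u_J]=[\partial u_J]$. Concretely, I would set
\[
\phi(u_J) = -\,m_J\,[\phi'(u_J)]
\]
and extend $S$-linearly, mirroring the definition $p(u_J)=m_J[p'(u_J)]$ in~\eqref{definitionp}. Before anything else one checks that $p$ and $\phi$ are honest $S$-module maps $T\to F$ and $T\to T$: every basis element $u_K$ occurring in $p'(u_J)$ or $\phi'(u_J)$ has all of its vertices equal either to elements of $J$ or to roots $\pi(u_{\sigma I_q})$ of sub-lcms of $m_J$, so $m_K\mid m_J$ and the coefficient $m_J/m_K$ produced by $m_J[\,\cdot\,]$ is a genuine monomial. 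The first point to record is that $d[x]=[\partial x]$ holds not just for basis elements but for every chain $x$, by $k$-linearity of $[\,\cdot\,]$ and of the two differentials.

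For the relation $pi=1_F$ it suffices to show $p'(u_J)=u_J$ for every rooted $J$, since then $p(u_J)=m_J[u_J]=u_J$. Fix a rooted $J=\{j_1,\ldots,j_k\}$. Each factor $\pi(u_{\sigma I_q})$ is a single exterior generator $u_{a_q}$ with $a_q=\pi(\{j_{\sigma(1)},\ldots,j_{\sigma(q)}\})$, and since $J$ is rooted every such root lies in $J$; hence a summand of $p'(u_J)$ is nonzero precisely when $a_1,\ldots,a_k$ are distinct, in which case the term is $\pm u_J$. I would then argue that exactly one $\sigma$ produces a nonzero summand: the top root is forced, $a_k=\pi(J)$, so $j_{\sigma(k)}=\pi(J)$; removing it leaves the rooted set $J\setminus\{\pi(J)\}$, whose root forces $j_{\sigma(k-1)}$, and so on, determining $\sigma$ uniquely by peeling off roots. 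For this $\sigma$ one has $a_q=j_{\sigma(q)}$ for all $q$, so the summand is $\sgn(\sigma)\,u_{j_{\sigma(1)}}\cdots u_{j_{\sigma(k)}}=\sgn(\sigma)^2\,u_J=u_J$. This gives $p'(u_J)=u_J$ and hence $pi=1_F$.

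For the homotopy relation I would compute $d\phi+\phi d$ on a basis element $u_J$ and collapse everything to the $\partial$-level. Pulling the scalar $m_J$ through the $S$-linear map $d$ and using $d[x]=[\partial x]$ gives $d\phi(u_J)=-m_J[\partial\phi'(u_J)]$, while expanding $d(u_J)=\sum_i(-1)^{i+1}(m_J/m_{J^i})u_{J^i}$ and using that each factor $m_{J^i}$ cancels against the $m_{J^i}[\,\cdot\,]$ inside $\phi(u_{J^i})$ gives $\phi d(u_J)=-m_J[\phi'(\partial u_J)]$. Summing and invoking Lemma~\ref{ip'chainhomotopyto1},
\[
(d\phi+\phi d)(u_J) = -m_J\big[(\partial\phi'+\phi'\partial)(u_J)\big] = -m_J\big[(1_T-ip')(u_J)\big] = ip(u_J)-u_J,
\]
where at the last step I use $m_J[u_J]=u_J$ and $m_J[ip'(u_J)]=i\big(m_J[p'(u_J)]\big)=ip(u_J)$. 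Since both sides are $S$-linear this extends from basis elements to all of $T$, giving exactly $ip-1=d\phi+\phi d$; the overall sign in the definition of $\phi$ is chosen precisely so that this identity, rather than its negative, holds. The same normalization trick applied to Lemma~\ref{pprimechainmappartial} shows $dp=pd$, so that $i$ and $p$ are chain maps and the diagram is a genuine transfer diagram.

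The only genuinely delicate points are the two bookkeeping facts feeding the formal computation: that the uniqueness-of-flag argument really does single out one permutation with the correct sign (so that $pi=1_F$ and not $\pm 1_F$), and that the normalization $m_J[\,\cdot\,]$ never introduces denominators (so that $p$ and $\phi$ land in free $S$-modules). Once these are in place the homotopy identity is a purely formal consequence of the $\partial$-level statements already established, since $d[x]=[\partial x]$ turns the rescaling $m_J[\,\cdot\,]$ into an exact intertwiner between the two differentials.
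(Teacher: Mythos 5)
Your proposal is correct and follows essentially the same route as the paper: define $\phi$ by rescaling $\phi'$ via $u_J\mapsto m_J[\phi'(u_J)]$ and use $d[x]=[\partial x]$ to transport the $\partial$-level homotopy of Lemma~\ref{ip'chainhomotopyto1} to the Taylor differential. Your extra care is welcome rather than divergent: the explicit peeling argument for $p'(u_J)=u_J$ fills in what the paper dismisses as ``clear,'' and your minus sign in $\phi$ makes the identity match the stated convention $ip-1=d\phi+\phi d$ exactly, whereas the paper's computation lands on $1-ip=d\phi+\phi d$.
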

\begin{proof}
Let $u_J \in T$ and define $\phi$ by $\phi(u_J) = m_J [\phi'(u_J)]$.
Then, using Lemma \ref{ip'chainhomotopyto1}, we have
$$ d \phi(u_J) =m_J d[\phi'(u_J)] = m_J [ \partial \phi'(u_J) ] = m_J [ u_J - ip'u_J - \phi' \partial u_J] = u_J - ipu_J - \phi du_J$$
and so $1_T$ and $ip$ are homotopic. On the other hand, we clearly have $pi = 1_F$ which finishes the proof. 
\end{proof}

\section{The Golod property for rooted rings}

Let $R=S/I$ be a rooted ring with rooting map $\pi$ and minimal free resolution $F$. The purpose of this section is to provide necessary and sufficient conditions for $R$ being Golod. Following \cite{jollenbeck2006}, we have the following definition.

\begin{definition}
 Let $R=S/I$ be a monomial ring and write $I=(m_1,\ldots,m_r)$. We say that $R$ satisfies the \emph{gcd condition} if for all generators $m_i$ and $m_j$ with $\gcd(m_i,m_j)=1$ there exists a $m_k \neq m_i,m_j$ such that $m_k$ divides $\lcm(m_i,m_j)$. 
\end{definition}

We have the following lemma where we write $\pi(m_i,m_j)$ for $\pi(\{ m_i,m_j\})$. 

\begin{lemma}
\label{gcdpigcd}
Let $R=S/I$ be a rooted ring with rooting map $\pi$. Write $I=(m_1,\ldots,m_r)$. Then $R$ satisfies the gcd condition if and only if $\pi(m_i,m_j) \neq m_i,m_j$ whenever $\gcd(m_i,m_j)=1$.
\end{lemma}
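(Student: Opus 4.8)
The plan is to prove the two implications separately, with the backward direction being essentially immediate and the forward direction carrying all the content.

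For the direction ``$\pi(m_i,m_j)\neq m_i,m_j$ whenever coprime $\implies$ gcd condition'', I would simply take $m_k=\pi(m_i,m_j)=\pi(\lcm(m_i,m_j))$ for a coprime pair. By the first axiom of a rooting map $m_k$ divides $\lcm(m_i,m_j)$, it is one of the generators by definition of $\pi$, and it is distinct from $m_i,m_j$ by hypothesis; this is exactly the gcd condition.

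For the forward direction I would argue by contradiction, assuming the gcd condition holds but $\pi(m_i,m_j)=m_i$ for some coprime pair (the case $\pi(m_i,m_j)=m_j$ being symmetric after relabelling). Writing $\ell=\lcm(m_i,m_j)=m_im_j$, the crucial input is that, since $R$ is a rooted ring, the resolution $F=F_{RC(L,\pi)}$ is minimal. I would first translate minimality into the combinatorial statement $(\ast)$: for every rooted set $J$ and every $v\in J$ one has $m_{J\setminus\{v\}}\neq m_J$; this holds because the coefficient $m_J/m_{J^i}$ of the differential is a unit exactly when $m_{J^i}=m_J$. The goal is then to exhibit a rooted set violating $(\ast)$. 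Using the second rooting-map axiom, from $\pi(\ell)=m_i$ one gets $\pi(n)=m_i$ for every $n$ with $m_i\mid n\mid \ell$; in particular every pair $\{m_i,m\}$ with $m\mid\ell$ a generator is rooted. It therefore suffices to find a generator $m\mid\ell$, $m\neq m_i,m_j$, for which $\{m_j,m\}$ is \emph{also} rooted: then $\{m_i,m_j,m\}$ is rooted with $\lcm=\ell$ (its three edges are rooted and it is unbroken with root $\pi(\ell)=m_i$), and deleting $m$ leaves the lcm equal to $\ell$, contradicting $(\ast)$.

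The heart of the argument — and the step I expect to be the main obstacle — is producing such an $m$. I would start from a generator $m_k\mid\ell$, $m_k\neq m_i,m_j$ supplied by the gcd condition and iterate: as long as $\{m_j,m^{(t)}\}$ is broken, set $m^{(t+1)}=\pi(m_j,m^{(t)})$. Coprimality of $m_i,m_j$ forces $m_i\nmid\lcm(m_j,m^{(t)})$, so each $m^{(t)}$ is again a generator dividing $\ell$ distinct from $m_i,m_j$. The key monovariant is $\lcm(m_j,m^{(t)})\in L$: whenever the process continues it must decrease strictly, because if $\lcm(m_j,m^{(t+1)})=\lcm(m_j,m^{(t)})$ then $m^{(t+1)}=\pi(\lcm(m_j,m^{(t+1)}))$ already lies in $\{m_j,m^{(t+1)}\}$, i.e.\ that pair is unbroken and the iteration has stopped. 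Since $L$ is finite the process terminates at a generator $m$ with $\{m_j,m\}$ rooted, completing the contradiction. The delicate points to get right are the bookkeeping showing $m^{(t)}\neq m_i,m_j$ at every stage and the strict-decrease-versus-termination dichotomy that drives the descent.
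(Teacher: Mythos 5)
Your proof is correct, and its skeleton matches the paper's: the backward direction is the same one-liner, and the forward direction likewise argues by contradiction from $\pi(m_i,m_j)=m_i$, exhibiting a rooted three-element set $\{m_i,m_j,m\}$ with $m\mid\lcm(m_i,m_j)$ and $m\neq m_i,m_j$, whose differential contains the term $u_iu_j$ with unit coefficient $\lcm(m_i,m_j,m)/\lcm(m_i,m_j)=1$, contradicting minimality of $F$ (your condition $(\ast)$ is exactly this reformulation of minimality). The only real difference is how the third vertex $m$ is produced. The paper takes $m=\pi(m_j,m_k)$ in a \emph{single} step: by the second rooting-map axiom, $\pi(m_j,m_k)\mid\lcm(m_j,\pi(m_j,m_k))\mid\lcm(m_j,m_k)$ forces $\pi(m_j,\pi(m_j,m_k))=\pi(m_j,m_k)$, so $\{m_j,\pi(m_j,m_k)\}$ is already unbroken and no iteration is needed. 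Your descent on the monovariant $\lcm(m_j,m^{(t)})$ is correct but heavier than necessary; in fact, by the observation just made, your process always halts after at most one step, since $\{m_j,m^{(1)}\}=\{m_j,\pi(m_j,m^{(0)})\}$ is automatically unbroken. On the other hand, your explicit bookkeeping that $m^{(t)}\neq m_i,m_j$ at every stage buys something the paper glosses over: the paper never rules out $\pi(m_j,m_k)=m_j$, and in that degenerate case its set $\{m_i,m_j,\pi(m_j,m_k)\}$ collapses to the pair $\{m_i,m_j\}$, so its displayed computation of $du$ (with the unit-coefficient term $u_iu_j$) breaks down; your process instead simply stops at $m=m_k$, which still yields the contradiction. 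So your argument is, if anything, slightly more robust; the one simplification worth importing from the paper is that a single application of $\pi$, stabilized by the second rooting-map axiom, does the work of your entire descent.
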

\begin{proof}
First, assume that $\pi(m_i,m_j) \neq m_i,m_j$ whenever $\gcd(m_i,m_j)=1$. Since $\pi(m_i,m_j)$ divides $\lcm(m_i,m_j)$, we can take $m_k = \pi(m_i,m_j)$ and so $R$ satisfies the gcd condition. 

Conversely, suppose that $R$ satisfies the gcd condition and take $m_i$ and $m_j$ with $\gcd(m_i,m_j)=1$. For contradiction, assume that $\pi(m_i,m_j) = m_i$. By the gcd condition, there exists some $m_k \neq m_i,m_j$ such that $m_k$ divides $\lcm(m_i,m_j)$. We claim that the set $\{ m_i,m_j, \pi(m_j,m_k) \}$ is rooted. To prove this, we need to verify that every subset is unbroken. Since $\pi(m_i,m_j) = m_i$, it follows immediately that $\{ m_i, m_j \}$ is unbroken. For $\{ m_j, \pi(m_j,m_k) \}$, note that
$$\pi(m_j,m_k) \vert \lcm(m_j, \pi(m_j,m_k)) \vert \lcm(m_j,m_k)$$
and so $\pi(m_j, \pi(m_j,m_k)) = \pi(m_j,m_k)$ as $\pi$ is a rooting map. Therefore, $\{ m_j, \pi(m_j,m_k) \}$ is unbroken. Next, consider $\{ m_i, \pi(m_j,m_k) \}$. Since $\pi(m_i,m_j) = m_i$, we have
$$\pi(m_i,m_j) \vert \lcm(m_i,\pi(m_j,m_k)) \vert \lcm(m_i,m_j)$$
and so $\pi(m_i,\pi(m_j,m_k)) = \pi(m_i,m_j)=m_i$. Consequently, $\{ m_i, \pi(m_j,m_k) \}$ is unbroken. Similarly, we have that $\{ m_i,m_j, \pi(m_j,m_k) \}$ is unbroken as
$$\pi(m_i,m_j) \vert \lcm(m_i,m_j,\pi(m_j,m_k)) \vert \lcm(m_i,m_j)$$
and thus $\pi(m_i,m_j,\pi(m_j,m_k)) = \pi(m_i,m_j)=m_i$. Therefore, $\{ m_i,m_j, \pi(m_j,m_k) \}$ is rooted as claimed. 

Let $u = u_iu_j\pi(u_j,u_k)$. Since $\pi(m_j,m_k)$ divides $\lcm(m_i,m_j)$, we have
\begin{equation*}
\begin{split}
du = \frac{\lcm(m_i,m_j)}{\lcm(m_j,\pi(m_j,m_k))}u_j\pi(u_j,u_k) -\frac{\lcm(m_i,m_j)}{\lcm(m_i,\pi(m_j,m_k))}u_i\pi(u_j,u_k) + u_iu_j.
\end{split}
\end{equation*}
Hence, $du \notin (x_1,\ldots,x_m)F$ which is a contradiction as $R$ is rooted. Therefore, $\pi(m_i,m_j) \neq m_i$. Swapping the roles of $i$ and $j$, we see that $\pi(m_i,m_j) \neq m_j$ which finishes the proof. 
\end{proof}

The following lemma is straightforward but included for completeness.

\begin{lemma}
 \label{plambda2minimalonnondisjoint}
 Let $u_I$ and $u_J$ be basis elements of $T$ with the property that $\gcd(m_I,m_J) \neq 1$. Then
$$p\lambda_2(u_I,u_J) \in (x_1,\ldots,x_m)F.$$
\end{lemma}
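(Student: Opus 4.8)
The plan is to compute $p\lambda_2(u_I, u_J)$ directly using the definitions and show the result lands in $(x_1,\ldots,x_m)F$ precisely because $\gcd(m_I,m_J)\neq 1$. First I would recall that $\lambda_2(u_I,u_J) = u_I\cdot u_J$ is the Taylor product, which is nonzero only when $I\cap J = \emptyset$, in which case it equals $\sgn(I,J)\frac{m_I m_J}{m_{I\cup J}}u_{I\cup J}$. If $I\cap J\neq\emptyset$ the product vanishes and there is nothing to prove, so I would reduce to the disjoint case. Applying $p$ and using the definition $p(u_K) = m_K[p'(u_K)]$, the key quantity becomes the scalar coefficient $\sgn(I,J)\frac{m_I m_J}{m_{I\cup J}}\cdot m_{I\cup J} = \sgn(I,J)\, m_I m_J$ multiplying $[p'(u_{I\cup J})]$, where the normalization $[\,\cdot\,]$ divides each basis term $u_K$ appearing in $p'(u_{I\cup J})$ by its multidegree $m_K$.

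The crux is then to analyze, for each rooted flag $\pi(u_{\sigma I_1})\cdots\pi(u_{\sigma I_k})$ appearing in $p'(u_{I\cup J})$ (where $k = |I\cup J|$), the coefficient of the corresponding basis element of $F$ after normalization. Writing the rooted set produced by $\sigma$ as $K$, its multidegree is $m_K = \lcm$ of the chosen $\pi$-values, which divides $m_{I\cup J}$. The normalized coefficient of $u_K$ is therefore $\sgn(I,J)\,m_I m_J/m_K$ (times a sign from reordering), and I must show every such coefficient lies in the maximal ideal $(x_1,\ldots,x_m)$, i.e. is a \emph{nonconstant} monomial. The plan is to bound the degree: since $\gcd(m_I,m_J)\neq 1$, there is a variable $x_\ell$ dividing both $m_I$ and $m_J$, hence $x_\ell^2 \mid m_I m_J$ but $m_K \mid m_{I\cup J}=\lcm(m_I,m_J)$ is divisible by $x_\ell$ only to the first power (the max of the two exponents). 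Comparing exponents variable by variable, $m_I m_J/m_K$ is a genuine monomial (it is integral since $m_K\mid m_{I\cup J}\mid m_I m_J$), and the shared variable $x_\ell$ forces its total degree to be strictly positive, so $m_I m_J/m_K\in(x_1,\ldots,x_m)$.

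The main obstacle I anticipate is bookkeeping the multidegrees carefully enough to guarantee the coefficient is never a unit: I need that $m_K$, the multidegree of whatever rooted basis element survives, genuinely divides $m_Im_J$ with a proper (nonconstant) quotient. The cleanest argument compares exponents of a single shared prime $x_\ell$: for each variable $x_t$ the exponent of $m_Im_J$ is $e_t(m_I)+e_t(m_J)$ while the exponent of $m_K$ is at most $\max\bigl(e_t(m_I),e_t(m_J)\bigr) = e_t(m_{I\cup J})$, so the quotient has exponent at least $\min\bigl(e_t(m_I),e_t(m_J)\bigr)\geq 0$ in each variable, and at the shared variable $x_\ell$ this minimum is $\geq 1$. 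Thus the quotient is a nonconstant monomial regardless of which $\sigma$ we consider, and summing over $\sigma$ keeps every term in $(x_1,\ldots,x_m)F$, giving $p\lambda_2(u_I,u_J)\in(x_1,\ldots,x_m)F$ as claimed.
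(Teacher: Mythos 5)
Your argument is correct and follows essentially the same route as the paper: reduce to the case $I\cap J=\emptyset$, where $\lambda_2(u_I,u_J)=\pm\frac{m_Im_J}{m_{I\cup J}}u_{I\cup J}$ and the scalar $\frac{m_Im_J}{m_{I\cup J}}=\gcd(m_I,m_J)\neq 1$ already lies in $(x_1,\ldots,x_m)$. The paper stops there, since $p$ is $S$-linear with image in $F$ and $(x_1,\ldots,x_m)F$ absorbs multiplication by that scalar; your further term-by-term expansion of $p(u_{I\cup J})$ and the exponent comparison for each rooted set $K$ are correct but unnecessary.
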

\begin{proof}
 Indeed, we have
\begin{equation*}
p\lambda_2(u_I\otimes u_J) = p(\frac{m_Im_J}{m_{I \cup J}}u_{I \cup J})
= \frac{m_Im_J}{m_{I \cup J}} p(u_{I \cup J}).
\end{equation*}
By assumption $\frac{m_Im_J}{m_{I \cup J}} \neq 1$ and so the result follows.
\end{proof}

\begin{lemma}
\label{pigcdimpliesgolod}
 Let $R$ be a rooted ring. If $R$ is gcd then $R$ is Golod
\end{lemma}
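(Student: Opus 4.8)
The plan is to apply Corollary \ref{munminimalimpliesgolod}, which says that $R$ is Golod if and only if the $A_\infty$-multiplications $\mu_n$ on the minimal free resolution $F$ are minimal for all $n$, where the $\mu_n$ come from an $A_\infty$-structure such that $F \otimes_S k$ and $K_R$ are quasi-isomorphic as $A_\infty$-algebras. The whole point of Section 5 is that for a rooted ring we have such a structure \emph{explicitly}: Theorem \ref{rootedresolutionsaresdrtaylor} gives a transfer diagram with the Taylor resolution $T$ (a genuine dga), and Merkulov's construction (Theorem \ref{merkulovtheorem}) then produces maps $\mu_n = p \circ \lambda_n \circ i^{\otimes n}$ satisfying the hypotheses of Corollary \ref{munminimalimpliesgolod}. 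So it suffices to show, assuming $R$ is gcd, that every $\mu_n$ is minimal, i.e. $\mu_n(u_{J_1} \otimes \cdots \otimes u_{J_n}) \in (x_1,\ldots,x_m)F$ for all basis elements.

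First I would dispose of $\mu_2$. Since $\mu_2 = p\lambda_2 i^{\otimes 2}$ and $\lambda_2$ is just the Taylor product, Lemma \ref{plambda2minimalonnondisjoint} already handles the case where the two inputs have a nontrivial gcd in their lcm-degrees; the only way $\mu_2(u_I \otimes u_J)$ could fail to be minimal is when $\gcd(m_I,m_J)=1$, and since $i^{\otimes 2}$ only feeds in rooted basis sets, the genuinely dangerous case is $\gcd(m_i,m_j)=1$ for two single generators. Here the gcd condition enters through Lemma \ref{gcdpigcd}: it guarantees $\pi(m_i,m_j) \neq m_i,m_j$, which forces the coefficient $\frac{m_im_j}{m_{ij}}$ appearing in $p(u_iu_j) = m_{ij}[p'(u_iu_j)]$ to be divisible by a variable, so $\mu_2$ is minimal.

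For the higher $\mu_n$ the strategy is induction, using the recursive formula \eqref{merkulovlambda} for $\lambda_n$ in terms of $\lambda_2(\phi\lambda_s, \phi\lambda_t)$ with $s+t=n$. The key structural observation is that $p$ is built from $p'$ via $p(u_J)=m_J[p'(u_J)]$, and $[\,\cdot\,]$ systematically divides out lcm-degrees; one wants to argue that whenever the homotopy $\phi$ and the products are composed, the surviving coefficients always acquire a variable factor. Concretely, I would show that the intermediate terms $\phi\lambda_s(\cdots)$ land in the image of $\phi$, whose definition ($\phi(u_J)=m_J[\phi'(u_J)]$ with $\phi'(u)=\pi(u)(u-\phi'(\partial u))$) always produces an extra factor of $\pi(u)$ — a generator strictly dividing the relevant lcm whenever the inputs interact nontrivially — and then that $\lambda_2$ of two such terms inherits minimality. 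The base of the induction is the minimality of $\mu_2$ just established, together with the minimality already forced by Lemma \ref{plambda2minimalonnondisjoint} on non-coprime pieces.

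The main obstacle I anticipate is the bookkeeping in the inductive step: the expansion of $\lambda_n$ is a signed sum over compositions $s+t=n$ of products of barycentric-subdivision terms, and one must track the lcm-coefficients carefully enough to see that \emph{every} summand, after applying $p=m_{(\cdot)}[\,\cdot\,]$, picks up a factor in $(x_1,\ldots,x_m)$. The gcd hypothesis is what prevents the degenerate coprime case from producing a unit coefficient; propagating this through the nested $\phi$'s and products — and confirming that the rooted-set constraints on $\im(p')$ keep all intermediate $\pi$-values strictly dividing the ambient lcm — is the delicate part, and is presumably where the bulk of the computation in the author's proof will go.
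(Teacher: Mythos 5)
Your framework is the same as the paper's: the transfer diagram of Theorem \ref{rootedresolutionsaresdrtaylor}, Merkulov's $\mu_n = p\lambda_n i^{\otimes n}$, and Corollary \ref{munminimalimpliesgolod}. But there are two genuine gaps. First, the reduction to $n=2$: since $p$ and $\lambda_2$ are $S$-linear and $\lambda_n = \sum \pm\,\lambda_2(\phi\lambda_s\otimes\phi\lambda_t)$, it suffices to show $p\lambda_2(u_I\otimes u_J)\in(x_1,\ldots,x_m)F$ for \emph{all} basis elements $u_I,u_J$ of the Taylor resolution $T$; no induction and no coefficient-tracking through $\phi$ is needed. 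Your proposed inductive mechanism would in fact not close: $\phi$ does not produce coefficients in the maximal ideal (for instance $\phi(u_{i_1}u_{i_2}) = u_{\pi(u_{i_1,i_2})}u_{i_1}u_{i_2}$ exactly, with unit coefficient --- the ``extra factor $\pi(u)$'' is an exterior-algebra generator, not a scalar), so minimality cannot be propagated through the intermediate terms $\phi\lambda_s(\cdots)$; it has to be extracted from the final application of $p$.

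Second, and more seriously, you have misidentified the hard case. Rooted sets are not singletons, so already for $\mu_2$ you must handle $p\lambda_2(u_I\otimes u_J)$ with $\gcd(m_I,m_J)=1$ and $\vert I\vert,\vert J\vert$ arbitrary, and after the reduction above this is precisely the one statement carrying all the content; Lemma \ref{gcdpigcd} alone only disposes of the case of two single generators. The paper's argument for general coprime $I,J$ runs as follows: expand $p(u_{I\cup J})$ via the barycentric formula with coefficients $\alpha_\sigma = m_{I\cup J}/\lcm(\pi(m_{\sigma I_1}),\ldots,\pi(m_{\sigma I_n}))$; suppose some $\alpha_\sigma=1$; let $q$ be the first position at which $\sigma$ enters $J$; use the gcd condition (through Lemma \ref{gcdpigcd}) to show that $m_{i_{\sigma(q)}}$ is not among $\pi(m_{\sigma I_q}),\ldots,\pi(m_{\sigma I_n})$ even though it divides their lcm; and then exhibit the rooted set $u_{i_{\sigma(q)}}\pi(u_{\sigma I_q})\cdots\pi(u_{\sigma I_n})$, whose differential has a unit coefficient, contradicting minimality of $F$. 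This last step --- where the hypothesis that $F$ is the \emph{minimal} resolution is used directly, beyond merely invoking the corollary --- is absent from your plan, and without it the lemma is not proved.
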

\begin{proof}
 Let $F$ be the minimal free resolution of $R$. Then by Theorem \ref{rootedresolutionsaresdrtaylor} there is a transfer diagram
  \begin{center}
\begin{tikzpicture}
\matrix(m)[matrix of math nodes,
row sep=3em, column sep=2.8em,
text height=1.5ex, text depth=0.25ex]
{F &T\\};
\path[->]
(m-1-1) edge [bend left=35] node[yshift=1.5ex] {$i$} (m-1-2)
(m-1-2) edge [bend left=35] node[yshift=-1.5ex] {$p$} (m-1-1)
(m-1-2) edge [loop right, in=35,out=-35,looseness=5, min distance=10mm] node {$\phi$} (m-1-2)
;
\end{tikzpicture}
\end{center}
where $i\colon F \to T$ is the inclusion and $p\colon T \to F$ is the map from (\ref{definitionp}). By Theorem \ref{merkulovtheorem}, we obtain an $A_{\infty}$-structure $\mu_n$ on $F$.
From Theorem \ref{munminimalimpliesgolod} it follows that it is sufficient to show that each $\mu_n$ is minimal. Recall that $\mu_n = p\lambda_n$ where
$$\lambda_n = \sum_{\substack{s+t=n \\ s,t \geq 1}} (-1)^{s+1} \lambda_2(\phi \lambda_s \otimes \phi \lambda_t).$$
Therefore, it is sufficient to prove that $p\lambda_2$ maps into the maximal ideal. Let $u_I$ and $u_J$ be basis elements of $T$. 
We may assume that $\gcd(m_I,m_J) =1$ since otherwise $p\lambda_2(u_I \otimes u_J) \in (x_1,\ldots,x_m)F$ by Lemma \ref{plambda2minimalonnondisjoint}.
Write $I = \lbrace i_1, \ldots, i_k \rbrace$ and $J = \lbrace i_{k+1}, \ldots, i_{n} \rbrace$ where $n=k+l$. 
By definition of $p$ we have
$$p(u_{i_1} \cdots u_{i_n}) = m[\sum_{\sigma \in S_n} \sgn(\sigma) \pi(u_{\sigma I_1}) \cdots \pi(u_{\sigma I_n}) ]$$
where $m = \lcm(m_I,m_J) = m_Im_J$ and $u_{\sigma I_p} = u_{i_{\sigma(1)}} \cdots u_{i_{\sigma(p)}}$.
Write 
$$\alpha_{\sigma} = \frac{m}{\lcm(\pi(m_{\sigma I_1}), \ldots, \pi(m_{\sigma I_n}))}$$
then
$$p(u_{i_1} \cdots u_{i_n}) = \sum_{\sigma \in S_n} \sgn(\sigma) \alpha_{\sigma} \pi(u_{\sigma I_1}) \cdots \pi(u_{\sigma I_n}).$$
We need to show that $\alpha_{\sigma} \in (x_1,\ldots,x_m)$ for all $\sigma \in S_n$. 
Suppose $\alpha_{\sigma} = 1$ for some $\sigma \in S_n$. 
Without loss of generality, we may assume $i_{\sigma(1)} \in I$. 
 Set
$$q = \min \lbrace q' \vert i_{\sigma(q')} \in J \rbrace.$$
By assumption, $\lcm(\pi(m_{\sigma I_1}),\ldots, \pi(m_{\sigma I_n}))$ is divisible by $m_{i_{\sigma(q)}}$. 
Since $\gcd(m_{i_{\sigma(q)}}, m_I) = 1$, we have $\gcd(m_{i_{\sigma(q)}}, \pi(m_{\sigma I_k})) =1$ for all $k<q$. Therefore, $\lcm(\pi(m_{\sigma I_q}), \ldots, \pi(m_{\sigma I_n}))$ is still divisible by $m_{i_{\sigma(q)}}$.

We claim that 
$$m_{i_{\sigma(q)}} \notin \lbrace \pi(m_{\sigma I_q}), \ldots, \pi(m_{\sigma I_n}) \rbrace.$$
Indeed, assume that $m_{i_{\sigma(q)}} = \pi(m_{\sigma I_s})$ for some $s \geq q$. We have that $\pi(m_{\sigma I_s}) = \pi(m_{i_{\sigma(1)}}, \ldots, m_{i_{\sigma(s)}})$. 
Then
$$m_{i_{\sigma(q)}} \vert \lcm(m_{i_{\sigma(1)}},m_{i_{\sigma(q)}}) \vert \lcm(m_{i_{\sigma(1)}}, \ldots, m_{i_{\sigma(s)}})$$
and so $m_{i_{\sigma(q)}} = \pi(m_{i_{\sigma(1)}},m_{i_{\sigma(q)}})$ since $\pi$ is a rooting map.
But by definition of $q$ we have $\gcd(m_{i_{\sigma(1)}},m_{i_{\sigma(q)}})=1$ so this contradicts $I$ being gcd by Lemma \ref{gcdpigcd}. 
Therefore
$$m_{i_{\sigma(q)}} \notin \lbrace \pi(m_{\sigma I_q}), \ldots, \pi(m_{\sigma I_n}) \rbrace.$$
Define
$$u =  u_{i_{\sigma(q)}} \pi(u_{\sigma I_q})\cdots \pi(u_{\sigma I_n})$$
then we claim that $u$ is in $F$. To see that $u$ is rooted, let $v \subseteq \{ u_{i_{\sigma(q)}}, \pi(u_{\sigma I_q}), \ldots, \pi(u_{\sigma I_n}) \}$.
If $u_{i_{\sigma(q)}} \notin v$ then there is nothing to prove as $\{\pi(u_{\sigma I_q}), \ldots, \pi(u_{\sigma I_n}) \}$ is rooted. So, assume $u_{i_{\sigma(q)}} \in v$. We can write
$$ v = u_{i_{\sigma(q)}} \pi(u_{\sigma I_{q_1}})\cdots \pi(u_{\sigma I_{q_k}})$$
for some $q_i \geq q$. We have 
$$ \pi(u_{\sigma I_{q_k}}) \vert m_v \vert m_{\sigma I_{q_k}}$$
and so $\pi(v) = \pi(u_{\sigma I_{q_k}}) \in v$. Hence, $u$ is rooted as claimed. But $du \notin (x_1,\ldots,x_m)F$ since $m_{i_{\sigma(q)}}$ divides $\lcm(\pi(m_{\sigma I_q}),\ldots, \pi(m_{\sigma I_n}))$ which contradicts minimality of $F$. 
\end{proof}

We now come to the main theorem of this section.

\begin{theorem}
\label{golodequicupzeroequipigcd}
 Let $R$ be a rooted ring. Then the following are equivalent.
 \begin{enumerate}
  \item The ring $R$ is Golod.
  \item The product on $\tor^S(R,k)$ vanishes.
  \item The ring $R$ is gcd. 
 \end{enumerate}
\end{theorem}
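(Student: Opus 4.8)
The plan is to prove the three equivalences by establishing the cycle $(3) \Rightarrow (1) \Rightarrow (2) \Rightarrow (3)$, leveraging the lemmas already in hand. The implication $(3) \Rightarrow (1)$ is precisely the content of Lemma \ref{pigcdimpliesgolod}, so that arrow is free. The implication $(1) \Rightarrow (2)$ is immediate from the definitions: if $R$ is Golod then by the lemma preceding Theorem \ref{ainfinitymasseyproductsaremasseyproducts} it satisfies $(B_r)$ for all $r$, and in particular $(B_2)$, which says the length-$2$ Massey product — i.e.\ the product on $\tor^S(R,k)$ — vanishes. Thus the only real work is the remaining implication $(2) \Rightarrow (3)$.

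To prove $(2) \Rightarrow (3)$ I would argue by contrapositive: assuming $R$ is not gcd, I would exhibit a nonzero product on $\tor^S(R,k)$. By Lemma \ref{gcdpigcd}, failure of the gcd condition means there exist generators $m_i, m_j$ with $\gcd(m_i,m_j)=1$ such that $\pi(m_i,m_j) = m_i$ or $\pi(m_i,m_j)=m_j$; after relabelling we may assume $\pi(m_i,m_j)=m_i$. The transfer diagram of Theorem \ref{rootedresolutionsaresdrtaylor} together with Merkulov's construction (Theorem \ref{merkulovtheorem}) equips $F$ with an $A_\infty$-structure $\mu_n$ with $\mu_2 = p\lambda_2$, and since $F$ is minimal we may identify $\tor^S(R,k)$ with $F \otimes_S k$ and the induced product with $\mu_2 \otimes 1$. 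The heart of the argument is therefore to compute $p\lambda_2(u_i \otimes u_j)$ and show it is \emph{not} minimal, i.e.\ not contained in $(x_1,\ldots,x_m)F$. Using $m_{\{i,j\}} = m_im_j$ (since the gcd is $1$) and the definition of $p$ via the rooting map, one finds that $p\lambda_2(u_i \otimes u_j) = \pm u_j$ modulo terms with nontrivial coefficients, because the surviving permutation contributes the rooted set $\pi(u_i)\pi(u_{\{i,j\}}) = u_iu_i$-type degeneracy is avoided exactly when $\pi(m_i,m_j)=m_i$. Concretely, the term with coefficient $\alpha_\sigma = 1$ survives precisely because $\pi(m_i,m_j)=m_i$ collapses the normalizing lcm, yielding a unit coefficient and hence a non-minimal product.

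I would then check that the basis element surviving with unit coefficient represents a genuine homology class, so that the induced product $\mu_2 \otimes 1$ is genuinely nonzero on $\tor^S(R,k)$ rather than merely nonzero as a chain-level map; this uses that $u_i$ and $u_j$ are cycles in $F \otimes_S k$ (their differentials land in the maximal ideal by minimality of $F$) and that $u_i u_j$ is a nonzero basis element of $F \otimes_S k$ in the appropriate degree. This contradicts the vanishing of the product, establishing $(2) \Rightarrow (3)$ and closing the cycle.

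The main obstacle I anticipate is the sign-and-coefficient bookkeeping in the computation of $p\lambda_2(u_i \otimes u_j)$: one must carefully extract which permutation $\sigma \in S_2$ yields the unit coefficient $\alpha_\sigma = 1$ and verify that the corresponding rooted set indeed gives a nonzero class after tensoring with $k$. The combinatorial reasoning here is essentially the $n=2$ special case of the argument already carried out in Lemma \ref{pigcdimpliesgolod}, run in reverse: there, the gcd condition was used to force every $\alpha_\sigma$ into the maximal ideal, whereas here the \emph{failure} of the gcd condition is exactly what produces a surviving unit coefficient. Once the non-minimality of $\mu_2$ is pinned down, the passage to a nonzero product on $\tor^S(R,k)$ is formal via Theorem \ref{munminimalimpliesgolod} and the identification of $\tor^S(R,k)$ with $F \otimes_S k$.
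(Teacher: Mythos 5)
Your proposal is correct and follows essentially the same route as the paper: $(3)\Rightarrow(1)$ via Lemma \ref{pigcdimpliesgolod}, $(1)\Rightarrow(2)$ immediately, and $(2)\Rightarrow(3)$ by computing $\mu_2(u_i,u_j)=p\lambda_2(u_i\otimes u_j)$ for a coprime pair and observing that $\pi(m_i,m_j)\in\{m_i,m_j\}$ forces a unit coefficient, contradicting minimality of $\mu_2$ (the paper phrases this directly rather than contrapositively, but the computation is identical). The only blemish is the garbled sentence about the ``$u_iu_i$-type degeneracy'': when $\pi(m_i,m_j)=m_i$ it is the term indexed by the \emph{identity} permutation that degenerates, while the transposition contributes $\pm u_{\{i,j\}}$ with coefficient $\alpha_\sigma=1$; this does not affect the correctness of your argument.
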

\begin{proof}
 The implication $1 \Rightarrow 2$ is immediate from the definition and $3 \Rightarrow 1$ follows by Lemma \ref{pigcdimpliesgolod}. 
 We prove $2 \Rightarrow 3$. Since the product on $\tor^S(R,k)$ is just $\mu_2 \otimes 1$, the product vanishes if and only if $\mu_2$ is minimal.
 Let $m_i$ and $m_j$ be generators such that $\gcd(m_i,m_j)=1$. Then
$$\mu_2(u_i, u_j) = \frac{\lcm(m_i,m_j)}{\lcm(\pi(m_i,m_j)m_i)} \pi(u_i,u_j)u_i - \frac{\lcm(m_i,m_j)}{\lcm(\pi(m_i,m_j)m_j)} \pi(u_i,u_j)u_j.$$
 If $\pi(m_i,m_j)=m_j$ then
$$\frac{\lcm(m_i,m_j)}{\lcm(\pi(m_i,m_j)m_j)} = 1$$
 which contradicts minimality of $\mu_2$ and so $\pi(m_i,m_j) \neq m_j$. 
 By the same argument, $\pi(m_i,m_j) \neq m_i$ and thus $R$ is gcd by Lemma \ref{gcdpigcd}.
\end{proof}

\begin{remark}
The equivalence between the second and third statement of Theorem \ref{golodequicupzeroequipigcd} is known. See for example Lemma 2.4 of \cite{katthan2015}
\end{remark}

\begin{example}
Let $S=k[x_1,\ldots,x_9]$ and let $I$ be the ideal 
$$(x_2x_5x_8, x_2x_3x_8x_9,x_5x_6x_7x_8,x_1x_2x_4x_5,x_1x_2x_3,x_4x_5x_6,x_7x_8x_9).$$
Label the generators by $u_1, \ldots, u_9$ and order them by $u_1 \prec u_2 \prec \cdots \prec u_9$. Let $L$ be the Lyubeznik resolution with respect to the ordering $\prec$. 
Then $L$ is easily seen to be minimal. Plainly, $I$ satisfies the gcd condition and so $S/I$ is Golod.
\end{example}

\section*{Acknowledgements}
The author would like to thank his PhD supervisor Jelena Grbi\'c for advice and guidance, 
Fabio Strazzeri and Francisco Belch\'{\i} for useful discussions 
and Lukas Katth\"an, Bernhard K\"ock, Taras Panov and the referee for helpful comments on an earlier version of this manuscript.

\bibliographystyle{plain}
\bibliography{master}
\end{document}